\newtheorem{thm}{Theorem}[section]
\newtheorem{lem}[thm]{Lemma}
\newtheorem{corollary}[thm]{Corollary}
\newtheorem{prop}[thm]{Proposition}
\newcommand{\RR}{\mathbb{R}}
\def \a{\alpha}
\def \b{\beta}
\def \d{\delta}
\def \g{\gamma}
\def \l{\lambda}
\def \ra{\rightarrow}
\title{On oscillating sticky Brownian motion}
\author{Wajdi Touhami}
\affil[]{University of Tunis El Manar, Higher Institute of Medical Technologies\\9 street Dr. Zouhair Essafi 1006 Tunis, Tunisia\\E-mail address: wajdi.touhami@istmt.utm.tn}
\date{}
\begin{document}

\maketitle
\begin{abstract}
Starting with a Brownian motion, we define and study a novel diffusion process by combining stickiness and oscillation properties. The associated stochastic differential equation, resolvent and semigroup are provided. Also the trivariate density of position, local time and occupation time of this diffusion is obtained explicitly. Furthermore, we give a construction of two Brownian motions with drift and scaling whose difference is an oscillating sticky Brownian motion, up to a multiplicative constant.

\end{abstract}
\section{Introduction}
Let $B=(B_t,t\geq 0)$ be a Brownian motion starting from $x\in\RR$, on a filtered probability space $(\Omega, \mathcal{F}, (\mathcal{F}_t)_{t\geq0}, P)$ satisfying the usual conditions. Let $\sigma_+, \sigma_-,\theta$ be positive numbers and let $m$ be the locally finite strictly positive measure on $\mathbb{R}$ given by$$m(dy)=(\frac{1}{\sigma^2_-}1_{\{y<0\}}+\frac{1}{\sigma^2_+}1_{\{y\geq0\}}) dy+\frac{1}{\theta}\delta_0(dy)$$
Define the additive functional $(\alpha_t, t\geq0)$ by $\alpha_t=\int_\mathbb{R} L^y_t(B)\,m(dy)$, where $L^y_t(B)$ stands for the symmetric local time at $y\in\mathbb{R}$ of $B$. Consider the process $X=(X_t, t\geq0)$ obtained from $B$ as follows $$X_t=B_{\alpha^{-1}},\qquad t\geq 0$$
We call $X$ oscillating sticky Brownian motion of parameters $\sigma_+, \sigma_-,\theta$ and we denote OSBM($\sigma_+$, $\sigma_-$, $\theta$). Our definition is just the recipe used by It\^o and McKean in \cite{IM} to construct a diffusion process with speed measure $m$ from Brownian motion. By the occupation time formula, $\alpha$ may be written as \begin{equation}\label{1.1}\a_t=\frac{t}{\sigma_{-}^2}+(\frac{1}{\sigma_{+}^2}-\frac{1}{\sigma_{-}^2})\,\int_0^t1_{\{B_s\geq0\}} ds+\frac{1}{\theta} L_t(B),\qquad t\geq0\end{equation}
The time change $\alpha^{-1}$ is interpreted as scaling time by $\sigma_{+}^2$ or $\sigma_{-}^2$ depending on whether $B$ is positive or negative and as slowing the Brownian motion when it is at zero. In the well-known special case $\sigma_{+}=\sigma_{-}=1$, $X$ is clearly a sticky Brownian motion. This case is investigated by many authors, see for example \cite{MA,H, HW, EP, B, CC, GR}. An other case of interest is obtained by taking $\sigma_+=1$ and tending $\sigma_-$ to infinity, it is the reflected (one sided) sticky Brownian motion. This diffusion was treated in several papers \cite{HM, W, W1, W2, EP, RS, HCA}. On the other hand if $\theta$ tends to infinity then $X$ becomes the oscillating Brownian motion which is defined and studied in \cite{KW}.\\Let $W=(W_t, t\geq0)$ be a standard Brownian motion. The first aim of this paper is to prove that $X$ is the unique weak solution of the stochastic differential equation 
\begin{equation}
\label{1.2}\left\{\begin{array}{ll}dX_t=(\sigma_- 1_{\{X_s<0\}}+\sigma_{+} 1_{\{X_s>0\}})\,dW_s\\
dL_t(X)=\theta 1_{\{X_s=0\}}\,ds\\X_0=x\end{array}\right.\end{equation}
 where $L_t(X)$ is the symmetric local time at zero of $X$. The equation (\ref{1.2}) is a special case of SDEs with discontinuous noise coefficient and a sticky boundary behaviour at zero. Next we show that $X$ is a solution to some well posed martingale problem. Basing on Skorokhod representations and using the proof method in \cite{W}, we compute the resolvent of $X$ and we deduce its semigroup. Another purpose of this paper is to generalise the result in (\cite{H}, p. 31) of sticky coupling, by adapting time change arguments. Precisely we construct for each $(x_1,x_2)\in\mathbb{R}^2$, a pair $(X,X’)$ of diffusion processes satisfying the following stochastic differential problem
\begin{equation}
\label{1.3}\left\{\begin{array}{lll} X\,\mbox{\it{is a Brownian motion with drift $\beta_1$ and scaling $\sigma_-$ starting from $x_1$}}\\
X’\,\mbox{\it{is a Brownian motion with drift $\beta_2$ and scaling $\sigma_-$ starting from $x_2$}}\\
\frac{X-X’}{\sqrt{2}}\,\mbox{\it{is an OSBM $(\sigma_+,\sigma_-,\sqrt{2}\theta)$}}
\end{array}\right.\end{equation}
where $\sigma_+<\sqrt{2}\sigma_-$ and $\beta_1, \beta_2\in\mathbb{R}$ verifying $|\beta_1-\beta_2|<2\theta$. We prove also that the law of $(X,X’)$ is uniquely specified. In a different sense, sticky coupling of two multi-dimensional and non-degenerate It\^o processes was examined in \cite{EZ}. Finding the joint density of the triplet $(X_t, L_t(X), \Gamma_t(X))$ is the final objective of our research, where $\Gamma_t(X)$ is the occupation time of $X$ defined by $\Gamma_t(X)=\int_0^t 1_{\{X_s\geq0\}} ds$. This trivariate density was studied first in \cite{KS} for the Brownian motion, in \cite{AB} for skew Brownian motion and in \cite{WA} for skew sticky Brownian motion. In addition we deduce the occupation time distribution of $X$. Various papers examining diffusions occupation time are available for the reader, see \cite{KW, AB, CC, WA, SS}.
\section{Stochastic differential equation, semigroup and sticky coupling:}
\begin{thm}\label{thm:2.1}
The stochastic differential equation (\ref{1.2}) has a unique weak solution, which is none other than OSBM($\sigma_+, \sigma_-, \theta$).
\end{thm}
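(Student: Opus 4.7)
My plan is to establish both existence and uniqueness by identifying any weak solution of (\ref{1.2}) with the time-change construction $X=B\circ\alpha^{-1}$, so that the driving Brownian motion $B$ and its additive functional $\alpha$ determine the law of $X$ entirely. For existence I take $X_t=B_{\tau_t}$ with $\tau:=\alpha^{-1}$; since the term $t/\sigma_-^2$ in (\ref{1.1}) gives a uniform positive lower bound on the growth of $\alpha$, both $\alpha$ and $\tau$ are continuous and strictly increasing, so $X$ is a continuous local martingale. A direct change of variable in (\ref{1.1}), together with the fact that $\{B=0\}$ has Lebesgue measure zero but carries all the mass of $dL(B)$, yields the sticky identity
\[
\int_0^t 1_{\{X_u=0\}}\,du=\frac{1}{\theta}\,L_{\tau_t}(B)=\frac{1}{\theta}\,L_t(X),
\]
where the second equality comes from matching Tanaka's formula for $|B|$ after time change with Tanaka's formula for $|X|$ and invoking the time-change of stochastic integrals; differentiating gives the second line of (\ref{1.2}). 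Substituting $s=\tau_u$ in $\alpha_{\tau_t}=t$ and simplifying via this sticky identity then forces $d\tau_u=(\sigma_-^2\,1_{\{X_u<0\}}+\sigma_+^2\,1_{\{X_u>0\}})\,du$ with no singular part, so $\langle X\rangle_t=\tau_t=\int_0^t(\sigma_-^2\,1_{\{X_s<0\}}+\sigma_+^2\,1_{\{X_s>0\}})\,ds$, and a Dambis--Dubins--Schwarz argument (enlarging the probability space by an independent Brownian motion on the set where the diffusion coefficient vanishes) produces the Brownian motion $W$ of the first line of (\ref{1.2}).

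For uniqueness, let $X$ be any weak solution of (\ref{1.2}) starting from $x$, set $C_t=\langle X\rangle_t=\int_0^t(\sigma_-^2\,1_{\{X_s<0\}}+\sigma_+^2\,1_{\{X_s>0\}})\,ds$, and define $\tau_t=\inf\{s:C_s>t\}$. The process $C$ is flat precisely on the sticky set $\{X=0\}$, whose Lebesgue measure up to time $t$ is $L_t(X)/\theta$ by the second line of (\ref{1.2}); since $X$ vanishes identically on each such flat interval, $B_t:=X_{\tau_t}$ is continuous across the jumps of $\tau$, and as a time change of the continuous local martingale $X$ by the inverse of its own quadratic variation it is a standard Brownian motion by L\'evy's characterisation. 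Reversing the substitution, the identity $t=\int_0^t 1_{\{X_s\ne 0\}}\,ds+L_t(X)/\theta$, combined with $L_t(X)=L_{C_t}(B)$ and the change of variables $ds=du/(\sigma_-^2\,1_{\{B_u<0\}}+\sigma_+^2\,1_{\{B_u\geq 0\}})$ valid off the zero set, rewrites as $t=\alpha_{C_t}$, where $\alpha$ is the functional (\ref{1.1}) built from $B$; hence $C_t=\alpha^{-1}_t$ and $X_t=B_{\alpha^{-1}_t}$, identifying the law of any weak solution with that of OSBM$(\sigma_+,\sigma_-,\theta)$.

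The main obstacle I anticipate is the careful bookkeeping at the sticky boundary: verifying that the symmetric local time transforms as $L_t(X)=L_{\tau_t}(B)$ under the (bijective but non-smooth) time change, that $B=X\circ\tau$ in the uniqueness step is genuinely continuous across the jumps of $\tau$ (which relies on the flat intervals of $\langle X\rangle$ being exactly the intervals on which $X$ is frozen at $0$), and that the Dambis--Dubins--Schwarz step in existence goes through despite the diffusion coefficient $\psi(x)=\sigma_-\,1_{\{x<0\}}+\sigma_+\,1_{\{x>0\}}$ vanishing at $x=0$ on a set of positive Lebesgue measure. These ingredients are each standard in the theory of sticky diffusions, so I expect the bulk of the work to lie in assembling them cleanly rather than in any single hard step.
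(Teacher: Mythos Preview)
Your approach is essentially the same as the paper's: for existence you time-change $B$ by $\alpha^{-1}$, compute the quadratic variation of $X=B\circ\alpha^{-1}$ via the sticky identity to get the integral representation against a new Brownian motion $W$; for uniqueness you reverse the construction by time-changing any solution $X$ by the inverse of its own bracket to recover a Brownian motion $B$ and then identify the time change with the $\alpha$ of~(\ref{1.1}). The one point you do not mention but which the paper treats with some care is that, in the uniqueness step, one must first verify $C_\infty=\langle X\rangle_\infty=\infty$ almost surely (the paper argues by contradiction using $|X_t|=|x|+M_t+\theta\int_0^t 1_{\{X_s=0\}}\,ds$), since otherwise the Dambis--Dubins--Schwarz Brownian motion is only defined on $[0,C_\infty)$ and the identification $C_t=\alpha^{-1}_t$ for all $t\ge 0$ would fail; you should add this check.
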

\begin{proof} Consider the additive functional $\alpha_t=\int_\mathbb{R} L_t^y(B)\,m(dy)$. By the occupation time formula we have $\alpha_t=\int_0^t (\sigma^{-2}_-1_{\{B_s<0\}}+\sigma^{-2}_+1_{\{B_s\geq0\}})ds+1/\theta L_t(B)$ for each $t\geq0$. Therefore $t\ra \alpha_t$ is continuous and strictly increasing. The fact that $$min(\sigma^{-2}_-, \sigma^{-2}_+)\,t+1/\theta L_t(B)\leq \alpha_t,\qquad t\geq 0$$
yields $\a_\infty=\infty$. Hence the inverse
\begin{equation}\label{2.1}A_t=\alpha^{-1}_t\end{equation}
is finite for all $t\geq0$. Since $\a=(\a_t)$ is adapted to $(\mathcal{F}_t)$ then each $A_t$ is a stopping time with respect to $(\mathcal{F}_t)$, 
consequently $(A_t, t\geq 0)$ define a time change with respect to $(\mathcal{F}_t)$. Consider the time changed process $X$ defined by $X_t=B_{A_t}$ for each $t\geq0$.
A basic property of local times gives $L_t(X)=L_{A_t}(B)$ for each $t>0$. On the other hand, since the Brownian motion $B$ has no sticky point i.e $\int_0^t 1_{\{B_s=0\}}\,ds=0$, then it follows that
$$\int_0^t 1_{\{X_s=0\}}\,ds=\int_0^{A_t} 1_{\{B_s=0\}}\,d\a_s
= \frac{1}{\theta}\int_0^t1_{\{X_s=0\}}\,dL_s(X)=\frac{1}{\theta} L_t(X)$$
where the last equality holds since the mapping $s\ra L_s(X)$ only increase when $X_s=0$.
On the other hand, using (\ref{1.1}) we get  
\begin{eqnarray}\Gamma_t(X)&=&\int_0^t 1_{\{B_{A_s}>0\}}\,d\a_{A_s}+\int_0^t 1_{\{X_s=0\}}\,ds\nonumber\\&=&
\int_0^{A_t}1_{\{B_s>0\}}\,(\frac{ds}{\sigma_{-}^2}+(\frac{1}{\sigma_{+}^2}-\frac{1}{\sigma_{-}^2})1_{\{B_s\geq0\}} ds+\frac{1}{\theta} dL_s(B))+\frac{1}{\theta} L_t(X)\nonumber\\&=&\frac{1}{\sigma^2_+}\int_0^{A_t}1_{\{B_s\geq0\}}\,ds+\frac{1}{\theta} L_t(X)
\nonumber\\&=&\frac{1}{\sigma^2_+}\Gamma_{A_t}(B)+\frac{1}{\theta} L_t(X)\label{*}
\end{eqnarray}
From (\ref{1.1}), (\ref{2.1}) and (\ref{*}) we obtain
\begin{eqnarray}A_t&=&\sigma^2_- t+(\sigma_{+}^2-\sigma^2_-)\Gamma_t(X)-\frac{\sigma_{+}^2}{\theta} L_t(X)\label{**}\\&=&\sigma^2_-\int_0^t(1_{\{X_s<0\}}+1_{\{X_s\geq0\}})\,ds+(\sigma_{+}^2-\sigma^2_-)\int_0^t 1_{\{X_s\geq0\}}\,ds-\sigma_{+}^2\int_0^t 1_{\{X_s=0\}}\,ds\nonumber\\&=&\int_0^t(\sigma^2_-1_{\{X_s<0\}}+\sigma_{+}^2 1_{\{X_s>0\}})\,ds\nonumber 
\end{eqnarray}
Note that $X$ is a continuous local martingale and $<X>_t=A_t$ for all $t\geq0$. Then, according to (\cite{RY}, Proposition V.3.8), there exists a Brownian motion $\{W_t, t\geq0\}$ such that
$$X_t=X_0+\int_0^t (\sigma^2_-1_{\{X_s<0\}}+\sigma_{+}^2 1_{\{X_s>0\}})^{1/2}\,dW_s=x+\int_0^t (\sigma^2_-1_{\{X_s<0\}}+\sigma_+^2 1_{\{X_s>0\}})\,dW_s$$
and this completes the proof of weak existence. Now we show that uniqueness in law holds for the equation (\ref{1.2}). Suppose that $X$ and $W$ solve (\ref{1.2}) and 
put $A_t=\int_0^t (\sigma_{+}^2 1_{\{X_s>0\}}+\sigma^2_-1_{\{X_s<0\}})\,ds$. Noting that $A_\infty=\infty$ almost surely. Indeed, to verify this claim note by the It\^o-Tanaka formula using (\ref{1.1}) that $|X_t|=|x|+M_t+\theta\int_0^\infty 1_{\{X_s=0\}}\,ds$
where $M=(M_t, t\geq0)$ is a continuous martingale with $<M>_t=A_t\uparrow A_\infty$ as $t\ra\infty$. Putting further $A^0_t=\sigma_{+}^2 \int_0^t 1_{\{X_s=0\}}\,ds$ and remarking that $A_t+A^0_t=\int_0^t(\sigma^2_-1_{\{X_s<0\}}+\sigma_{+}^2 1_{\{X_s\geq0\}})\,ds$ for $t\geq0$. One can see that $A^0_t\uparrow\infty$ on $\{A_\infty<\infty\}$ as $t\ra\infty$. But $|X_t|=|x|+M_t+\frac{\theta}{\sigma_{+}^2}A^0_t\ra\infty$ as $t\ra\infty$ almost surely on $\{A_\infty<\infty\}$ contradicting the fact that $A^0_t\uparrow\infty$ on $\{A_\infty<\infty\}$ unless its probability is zero. This shows that $A_\infty=\infty$ almost surely as claimed.
Since $t\ra A_t$ is continuous and increasing, then its right inverse $t\ra \a_t$ defined by \begin{equation}\label{2.4}\a_t=\inf\{s\geq0: A_s>t\}\end{equation} is finite for all $t\geq 0$ and satisfies the same properties itself. 
Taking into account that $<X>_t=A_t$ and using Dambis-Dubins-Schwartz theorem (\cite{RY}, Theorem V.1.6), we affirm that $B=(X_{\a_t}, t\geq 0)$ is a Brownian motion and \begin{equation}\label{2.5}L_t(B)=L_{\a_t}(X)=\theta\int_0^{\a_t} 1_{\{X_s=0\}} ds\end{equation} 
Therefore we get from (\ref{2.4}), (\ref{2.5}) and (\ref{**}) 
\begin{align}\a_t&=\int_0^{\a_t} 1_{\{X_s<0\}} ds+\int_0^{\a_t} 1_{\{X_s>0\}} ds+\int_0^{\a_t} 1_{\{X_s=0\}} ds\nonumber\\&=\frac{1}{\sigma^2_-}\int
_0^{\a_t} (\sigma^2_-1_{\{X_s<0\}}+\sigma_{+}^2 1_{\{X_s>0\}})\,ds+\frac{\sigma^2_--\sigma_{+}^2}{\sigma^2_-}\int_0^{\a_t} 1_{\{X_s>0\}}\,ds+\frac{1}{\theta} L_t(B)\nonumber\\
&=\frac{t}{\sigma_{-}^2}+(\frac{1}{\sigma_{+}^2}-\frac{1}{\sigma_{-}^2})\big[(\sigma_{+}^2-\sigma_{-}^2)\int_0^{\a_t} 1_{\{X_s\geq0\}} ds+\sigma_{-}^2\int_0^{\a_t} 1_{\{X_s\geq0\}}ds-\sigma_{+}^2\int_0^{\a_t} 1_{\{X_s=0\}} ds\big]\nonumber\\&+\frac{1}{\theta} L_t(B)\nonumber\\&=\frac{t}{\sigma_{-}^2}+(\frac{1}{\sigma_{+}^2}-\frac{1}{\sigma_{-}^2})\,\int_0^{\a_t} 1_{\{X_s\geq0\}}\,dA_s+\frac{1}{\theta} L_t(B)\nonumber\\&=\frac{t}{\sigma_{-}^2}+(\frac{1}{\sigma_{+}^2}-\frac{1}{\sigma_{-}^2})\,\Gamma_t(B)+\frac{1}{\theta} L_t(B)\label{3*}
\end{align}
This shows that $t\ra\alpha_t$ is strictly increasing (and continuous) hence $A_t=\alpha^{-1}_t$ is the proper inverse for $t\geq0$. It follows in particular that $\alpha_{A_t}=t$ so that 
\begin{equation}\label{2.6}X_t=X_{\a_{A_t}}=B_{A_t}\end{equation}
From (\ref{3*}) and (\ref{2.6}) we see that $X$ is a well-determined measurable functional of the Brownian motion $B$. This shows that the law of X solving (\ref{2.1}) is uniquely
determined and this completes the proof of law uniqueness.
\end{proof}
\begin{prop}\label{prop:2.2}
There exists a stochastic process $(X_t, t\geq0)$ defined on some probability space starting at $x\in\mathbb{R}$ such that $X$ is a continuous local martingale and so are the processes
\begin{equation}\label{2.7} X_t^2-\int_0^t(\sigma^2_- 1_{\{X_s<0\}}+\sigma_{+}^21_{\{X_s>0\}})\,ds\end{equation}
\begin{equation}\label{2.8}|X_t|-\theta\int_0^\infty 1_{\{X_s=0\}}\,ds\end{equation}
Moreover the law of $X$ is uniquely specified and is equal to the law of OSBM($\sigma_{+},\sigma_-, \theta$) starting at $x$. 
\end{prop}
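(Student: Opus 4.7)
The plan is to recognize the three-condition martingale characterization as essentially equivalent to the SDE (\ref{1.2}), and then invoke Theorem~\ref{thm:2.1}. Existence is immediate from Theorem~\ref{thm:2.1}: I would take $X$ to be the OSBM$(\sigma_+,\sigma_-,\theta)$ constructed there, so that $X$ solves (\ref{1.2}). Because $X$ is a stochastic integral against a Brownian motion it is a continuous local martingale with quadratic variation $\langle X\rangle_t=\int_0^t(\sigma_-^2 1_{\{X_s<0\}}+\sigma_+^2 1_{\{X_s>0\}})\,ds$, so It\^o's formula applied to $x^2$ yields (\ref{2.7}). For (\ref{2.8}), the It\^o--Tanaka formula gives $|X_t|=|x|+N_t+L_t(X)$ with $N$ a continuous local martingale, and the second line of (\ref{1.2}) identifies $L_t(X)=\theta\int_0^t 1_{\{X_s=0\}}\,ds$, which produces (\ref{2.8}).

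For the uniqueness statement, I would start with an arbitrary process $X$ satisfying the three conditions and recover (\ref{1.2}). The first two conditions fix the bracket $\langle X\rangle_t=\int_0^t(\sigma_-^2 1_{\{X_s<0\}}+\sigma_+^2 1_{\{X_s>0\}})\,ds$, so Proposition V.3.8 of Revuz--Yor (used already in the proof of Theorem~\ref{thm:2.1}) furnishes a Brownian motion $W$ with
\begin{equation*}
X_t=x+\int_0^t(\sigma_- 1_{\{X_s<0\}}+\sigma_+ 1_{\{X_s>0\}})\,dW_s,
\end{equation*}
which is the first line of (\ref{1.2}). For the sticky condition, I would again write $|X_t|=|x|+N_t+L_t(X)$ by It\^o--Tanaka and compare with (\ref{2.8}): subtracting the two decompositions, the process $L_t(X)-\theta\int_0^t 1_{\{X_s=0\}}\,ds$ must be a continuous local martingale. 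Since both $L_t(X)$ and $\theta\int_0^t 1_{\{X_s=0\}}\,ds$ are continuous adapted increasing processes, their difference has finite variation, and a continuous local martingale of finite variation starting at $0$ is identically zero. This yields $dL_t(X)=\theta 1_{\{X_s=0\}}\,ds$, which is the second line of (\ref{1.2}). Theorem~\ref{thm:2.1} then concludes that the law of $X$ is that of OSBM$(\sigma_+,\sigma_-,\theta)$ starting at $x$.

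The main obstacle I anticipate is the bookkeeping around the symmetric local time in the uniqueness step: one has to make sure that the ``increasing part'' extracted from (\ref{2.8}) really coincides with the symmetric local time appearing in It\^o--Tanaka (i.e.\ that the normalization conventions match those used in the definition of OSBM), and that the identification $L_t(X)=\theta\int_0^t 1_{\{X_s=0\}}\,ds$ truly follows from uniqueness of the canonical decomposition of the continuous semimartingale $|X|$. The continuity of both candidate finite-variation parts is essential here, as is the fact that $L(X)$ only grows on $\{X=0\}$, which is what makes the two expressions compatible pointwise rather than merely equal in a distributional sense.
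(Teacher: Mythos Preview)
Your proof is correct and follows essentially the same strategy as the paper's: existence via Theorem~\ref{thm:2.1} together with It\^o's formula and It\^o--Tanaka, and uniqueness by reading off $\langle X\rangle_t$ and $L_t(X)=\theta\int_0^t 1_{\{X_s=0\}}\,ds$ from the martingale conditions and then invoking (the proof of) Theorem~\ref{thm:2.1}. Your version is in fact more careful than the paper's, which simply asserts these identities without spelling out the finite-variation/local-martingale cancellation you use.
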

\begin{proof}
Let $X$ be a weak solution of the equation (\ref{1.2}) starting at $x$. Then $X$ is a time changed Brownian motion, so it is a continuous local martingale. Let $A_t=\int_0^t({\sigma^2}_- 1_{\{X_s<0\}}+\sigma_{+}^2 1_{\{X_s>0\}})\,ds$. Since $<X>_t=A_t$ then the process given by (\ref{2.7}) is a local martingale. Also, by the It\^o-Tanaka formula we have for each $t\geq0$
$$|X_t|-\theta\int_0^\infty 1_{\{X_s=0\}}\,ds=x+\int_0^\infty (\sigma_{-} 1_{\{X_s<0\}}+\sigma_{+}1_{\{X_s>0\}})\,dW_s$$
which leads to the existence. Now, let $X$ be a stochastic process satisfying the conditions of the proposition. 
Define $\a_t=A_t^{-1}$. Clearly $<X>_t=A_t$ and $L_t(X)=\theta\int_0^\infty 1_{\{X_s=0\}} ds$ for each $t\geq 0$. According to the proof of Theorem \ref{thm:2.1} we see that $X$ is an OSBM$(\sigma_{+},\sigma_-, \theta$).
\end{proof}
For $x\in\mathbb{R}$, we denote by $P^x$ the conditional probability $P(\, \cdot\,|X_0=x)$ and by $E^x$ the expectation with respect to $P^x$. For $x,y\in\mathbb{R}$, $p_t(x, y)$ stands for the transition density of Brownian motion. Denote by 
$$H_t^0=\int_0^t 1_{\{X_s=0\}}\,ds,\;H_t^+=\int_0^t 1_{\{X_s>0\}}\,ds,\;H_t^-=\int_0^t 1_{\{X_s<0\}}\,ds$$
and by $\eta_t^0$, $\eta_t^+$ and $\eta_t^-$ their right inverse respectively.
The proof of the following lemma can be easily adapted from the proof of (\cite{CC}, Lemma 1).
\begin{lem}\label{lem:2.3} There exists independent Brownian motions $(W^+_t, t\geq0)$  and $(W^-_t, t\geq0)$ such that 
$$X^+_{\eta_t^+}=\sigma_{+} W^+_t+\frac{\theta}{2} H_{\eta_t^+}^0,\quad X^-_{\eta_t^-}=\sigma_{-} W^-_t+\frac{\theta}{2} H_{\eta_t^-}^0$$
where $$H_{\eta_t^+}^0=\frac{2\sigma_{+}}{\theta}\sup_{s\leq t}(-W^+_s),\quad H_{\eta_t^-}^0=\frac{2\sigma_{-}}{\theta}\sup_{s\leq t}(-W^-_s)$$
\end{lem}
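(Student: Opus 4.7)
The plan is to combine the Itô--Tanaka formula applied to $X^+$ and $X^-$ with a time change, use Knight's theorem on orthogonal continuous martingales to introduce the two independent Brownian motions $W^+, W^-$, and conclude via Skorokhod's reflection lemma.

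Starting from the SDE \eqref{1.2} together with the identity $L_t(X) = \theta H^0_t$, the Itô--Tanaka formula applied to $X^+$ and $X^-$ yields
\begin{equation*}
X^+_t = x^+ + M^+_t + \frac{\theta}{2} H^0_t, \qquad X^-_t = x^- + M^-_t + \frac{\theta}{2} H^0_t,
\end{equation*}
where $M^+_t := \sigma_+ \int_0^t 1_{\{X_s>0\}}\,dW_s$ and $M^-_t := -\sigma_- \int_0^t 1_{\{X_s<0\}}\,dW_s$ are continuous martingales satisfying $\langle M^\pm\rangle_t = \sigma_\pm^2 H^\pm_t$ and $\langle M^+, M^-\rangle_t \equiv 0$ (since $1_{\{X>0\}} 1_{\{X<0\}} = 0$).

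Next, evaluating these decompositions at $\eta^+_t$ and $\eta^-_t$ gives
\begin{equation*}
X^\pm_{\eta^\pm_t} = x^\pm + M^\pm_{\eta^\pm_t} + \frac{\theta}{2} H^0_{\eta^\pm_t}.
\end{equation*}
Crucially, $M^+$ is constant on every interval on which $X \le 0$, so $M^+_{\eta^+_t}$ is continuous in $t$ despite the jumps of $\eta^+$; the analogous statement holds for $M^-_{\eta^-_t}$. Applying Knight's theorem (\cite{RY}, Theorem V.1.9) to the orthogonal pair $(M^+, M^-)$ then produces two independent standard Brownian motions $W^+, W^-$ with $M^\pm_{\eta^\pm_t} = \sigma_\pm W^\pm_t$. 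Specialising to $x = 0$ (which the statement of the lemma implicitly requires, since otherwise $X^+_{\eta^+_0} = x^+$ need not vanish) yields the first pair of identities.

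It remains to identify $H^0_{\eta^+_t}$ via Skorokhod's reflection lemma applied to $X^+_{\eta^+_t} = \sigma_+ W^+_t + \frac{\theta}{2} H^0_{\eta^+_t}$. One checks that $t \mapsto H^0_{\eta^+_t}$ is continuous, vanishes at $0$, is nondecreasing, and only increases at $t$-values for which $X^+_{\eta^+_t} = 0$; the last two properties are immediate from the definition of $H^0$ combined with the fact that the singular increments of $H^0_{\eta^+_t}$ occur exactly when $\eta^+_t$ is a sticky point. Continuity is the delicate point and is the main obstacle in the proof: each plateau $[a,b]$ of $H^+$ must be shown to be a closed negative excursion of $X$ containing no sticky time, which uses the structural fact that near every sticky point $X$ admits tiny positive excursions, so the sticky set sits on the boundary of $\{X>0\}$ and cannot occupy the interior of any $H^+$-plateau; consequently $H^0_a = H^0_b$, ruling out jumps of $H^0_{\eta^+_t}$. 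Skorokhod's lemma then gives $\frac{\theta}{2} H^0_{\eta^+_t} = \sigma_+ \sup_{s\le t}(-W^+_s)$, which rearranges to the claimed formula, and the argument for $H^0_{\eta^-_t}$ is symmetric.
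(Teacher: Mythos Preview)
Your proposal is correct and follows the standard route that the paper itself points to: the paper gives no proof of Lemma~\ref{lem:2.3} beyond the sentence ``The proof of the following lemma can be easily adapted from the proof of (\cite{CC}, Lemma 1)'', and the argument in \cite{CC} is precisely the It\^o--Tanaka $+$ Knight $+$ Skorokhod scheme you carry out. Your identification of the continuity of $t\mapsto H^0_{\eta^+_t}$ as the only delicate point is apt, and your justification---that every zero of $X$ is approached by times where $X>0$ (inherited from the regularity of $0$ for Brownian motion through the continuous time change $\alpha$), hence no interior point of a plateau of $H^+$ can be a zero of $X$---is exactly the right mechanism. Two minor technical points you leave implicit but which are needed: (i) $H^\pm_\infty=\infty$ (so that $\eta^\pm_t<\infty$ for all $t$ and Knight's theorem applies), which follows from $H^+_t=\sigma_+^{-2}\Gamma_{A_t}(B)$ together with $A_\infty=\infty$ and recurrence of $B$; and (ii) the verification that $d(H^0_{\eta^+_\cdot})$ is carried by $\{t:X_{\eta^+_t}=0\}$, which you sketch and which follows since $X_{\eta^+_t}>0$ forces $H^+$ strictly increasing and $H^0$ locally constant near $\eta^+_t$.
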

\begin{prop}\label{prop:2.4}
Let $X$ be an OSBM($\sigma_{+},\sigma_-, \theta$) and $r=\frac{1}{2}(\sigma^{-1}_-+\sigma^{-1}_+)$. Then the transition kernel $Q_t(x,dy)$ of $X$ is given by
$$Q_t(x,dy)=\left\{ \begin{array}{ll} {\Huge \displaystyle \sigma^{-2}_{+}g(t,\frac{x+y}{\sigma_+})dy+\theta^{-1} g(t,\frac{x}{\sigma_+})\d_0(dy)+p_0(t,x,y) dy}\quad\it{if}\,x,y\geq0\\\\ 
{\Huge \displaystyle \sigma^{-2}_{-} g(t,\frac{x}{\sigma_{+}}-\frac{y}{\sigma_{-}})dy+\theta^{-1} g(t,\frac{x}{\sigma_+})\d_0(dy)}\quad\it{if}\,x\geq 0,y<0\\\\
{\Huge \displaystyle\sigma^{-2}_{-} g(t,-\frac{x+y}{\sigma_-})dy+\theta^{-1} g(t,-\frac{x}{\sigma_-})\d_0(dy)+p_0(t,x,y) dy}\quad\it{if}\,x,y<0\\\\
{\Huge \displaystyle \sigma^{-2}_+ g(t,-\frac{x}{\sigma_{-}}+\frac{y}{\sigma_{+}})dy+\theta^{-1} g(t,-\frac{x}{\sigma_-})\d_0(dy)}\quad\it{if}\,x,y<0\end{array}\right.$$
\noindent{where} $p_0(t,x,y)=[p(t\sigma^2_{+},x,y)+p(t\sigma^2_{+},x,-y)]1_{\{x,y>0\}}+[p(t\sigma^2_{-},x,y)+p(t\sigma^2_{-},x,-y)]1_{\{x,y<0\}}$\\
and $$g(s,z)=\theta\exp(2 r \theta z+\theta^2 r s)\,\mbox{\it {erfc}}(\frac{z}{\sqrt{2s}}+r \theta \sqrt{2s}),\quad s\geq 0, z\in\mathbb{R}$$
\end{prop}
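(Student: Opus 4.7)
The plan is to compute the $\lambda$-resolvent $G_\lambda(x,dy):=\int_0^\infty e^{-\lambda t}\,Q_t(x,dy)\,dt$ of $X$ in closed form, using the Skorokhod representations of Lemma \ref{lem:2.3}, and then to invert the Laplace transform term by term. The point is that Lemma \ref{lem:2.3} provides, on each excursion interval in which $X$ is positive (resp.\ negative), an explicit description of $X$ as a $\sigma_+$- (resp.\ $\sigma_-$-) scaled process built from the independent Brownian motion $W^+$ (resp.\ $W^-$); combined with the sticky clock $L_t(X)=\theta\int_0^t 1_{\{X_s=0\}}\,ds$ from Proposition \ref{prop:2.2}, this furnishes a complete pathwise description of $X$ from which $G_\lambda$ can be extracted.

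The resolvent computation will proceed by strong Markov decomposition at $0$. For $x=0$, one decomposes $G_\lambda f(0)$ into the contribution from the time $X$ spends at $0$ (which carries the atom of the speed measure $m$) plus the contributions of the successive positive and negative excursions; the Skorokhod representations give the Laplace transforms of these excursion contributions explicitly. For a general starting point $x$, the strong Markov property at the first hitting time $\tau_0$ of $0$ reduces the computation to the case $x=0$, composed with the known resolvent of the killed $\sigma_\pm$-scaled Brownian motion on the corresponding half-line. The outcome is a closed formula for $G_\lambda(x,dy)$ involving only the exponentials $e^{-|x-y|\sqrt{2\lambda}/\sigma_\pm}$ and the factor $1/(\sqrt{2\lambda}+2r\theta)$ produced by the sticky boundary.

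Laplace inversion then uses two classical identities: first, $\int_0^\infty e^{-\lambda t}\,p(t\sigma^2,x,y)\,dt = e^{-|x-y|\sqrt{2\lambda}/\sigma}/(\sigma\sqrt{2\lambda})$, which produces the method-of-images Gaussian $p_0(t,x,y)$ when $x$ and $y$ share sign, together with an identity of the form
\[
\int_0^\infty e^{-\lambda t}\,g(t,z)\,dt \;\propto\; \frac{e^{-z\sqrt{2\lambda}}}{\sqrt{2\lambda}\,(\sqrt{2\lambda}+2r\theta)},
\]
which identifies $g(t,\cdot)$ as the Laplace inverse of the kernel produced by the sticky boundary. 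Splitting the closed form for $G_\lambda(x,dy)$ into the four sign regimes for $(x,y)$ and reading off the inverse transforms reproduces the stated formulas, with the shift arguments $(x+y)/\sigma_+$, $x/\sigma_+-y/\sigma_-$, $-(x+y)/\sigma_-$ and $-x/\sigma_-+y/\sigma_+$ appearing through the matching at $0$.

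The hard part will be the algebraic bookkeeping: the matching at $0$ couples the $\sigma_\pm$-exponentials and produces cross terms that must be rearranged into a piece depending on only one of $\sigma_\pm$ (the $p_0$-piece, present only when $x$ and $y$ share sign) plus a piece carrying the boundary interaction (the $g$-piece, present in all four cases and responsible for the atom at $0$). The most delicate single step is identifying the Laplace inverse of the kernel $e^{-z\sqrt{2\lambda}}/[\sqrt{2\lambda}(\sqrt{2\lambda}+2r\theta)]$ with the specific function $g(t,z)=\theta e^{2r\theta z+r\theta^2 t}\operatorname{erfc}(z/\sqrt{2t}+r\theta\sqrt{2t})$, a Borodin--Salminen-type inversion that can be verified by differentiating under the integral sign or by recognising $g$ as a scaled density of a drifted Brownian motion.
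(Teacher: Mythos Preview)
Your plan matches the paper's proof in overall structure: compute the resolvent $R_\lambda(x,dy)$, reduce a general starting point to $x=0$ via the first-passage decomposition $R_\lambda(x,dy)=R^0_\lambda(x,dy)+E^x(e^{-\lambda T_0})R_\lambda(0,dy)$ (with $R^0_\lambda$ the killed scaled-Brownian resolvent), and then invert the Laplace transform term by term to recover $Q_t$, identifying the boundary piece with the function $g$.

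The one genuine difference is in how $R_\lambda(0,dy)$ is extracted from Lemma~\ref{lem:2.3}. The paper uses Warren's exponential-time device: take three independent exponential($\lambda$) variables $T_1,T_2,T_3$ and set $T=\eta^0_{T_1}\wedge\eta^+_{T_2}\wedge\eta^-_{T_3}$. Because $H^0_t+H^+_t+H^-_t=t$, this $T$ is itself exponential($\lambda$), so that $R_\lambda(0,dy)=\lambda^{-1}P^0(X_T\in dy)$, and Lemma~\ref{lem:2.3} turns the events $\{T=\eta^+_{T_2}\}$ and $\{T=\eta^-_{T_3}\}$ into independent comparisons of exponential variables, yielding $R_\lambda(0,dy)$ without ever enumerating excursions. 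Your proposed direct decomposition into time-at-$0$ plus successive positive/negative excursion contributions reaches the same formula, but it requires summing a geometric series over the alternating excursion structure (or invoking It\^o excursion theory with a two-type excursion measure), bookkeeping that the exponential-time trick sidesteps entirely. Both routes are valid; the paper's is shorter, while yours is closer to the general speed-measure/excursion machinery and would generalise more readily.
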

\begin{proof}
We are guided by (\cite{W}, Proposition 13) and also (\cite{CC}, Lemma 2). Let $W$ be a Brownian motion so that $(X, W)$ is a solution of (\ref{1.2}). Take exponentially distributed random times $T_1,T_2$ and $T_3$ with rate $\l$, independently from $(X, W)$ and define 
\begin{equation}\label{2.9}T=\eta^0_{T_1}\wedge\eta^+_{T_2}\wedge\eta^-_{T_3}\end{equation}
By Lemma \ref{lem:2.3}, $\frac{\theta}{2\sigma_{+}}\, H_{\eta_t^+}^0$ and $\frac{\theta}{2\sigma_{-}}\, H_{\eta_t^-}^0$ are 
the running supremums of independent Brownian motions $W^+$ and $W^-$ respectively. Therefore $H_{\eta_{T_2}^+}^0$ and $H_{\eta_{T_3}^-}^0$ are exponentially distributed with rate $\frac{\theta}{2\sigma_{+}}\gamma$ and $\frac{\theta}{2\sigma_{-}}\gamma$ respectively, where $\g=\sqrt{2\l}$.
Note that $T$ is exponentially distributed with rate $\l$ as for the case given in \cite{W}. Now we have $X_T=0$ if and only if $T=\eta^0_{T_1}$ which can be true only
 if $\eta_{T_1}^0<\eta_{T_2}^+\wedge\eta_{T_3}^-$. Since $H^0_t$ is right continuous and increasing process then the inequality $\eta_{T_1}^0<\eta_{T_2}^+\wedge\eta_{T_3}^-$ implies $T_1<H^0_{\eta^+_{T_2}}\wedge H^0_{\eta^-_{T_3}}$. Thus we get
$$P(X_T=0)=\frac{\l}{\l+\g r \theta}$$ 
For $X_T>0$, we should have $T=\eta_{T_2}^+$ and this can be possible only if $H_{\eta_{T_2}^+}^0<T_1\wedge H_{\eta^-_{T_3}}^0$. Note that ${\sigma^{-1}_{+}} X_{\eta_{T_2}^+}^+$ is a reflected Brownian motion and its probability law for $y>0$ is independent from the level of the running supermum of $-W^+$, which is equal to $H_{\eta_{T_2}^+}^0$ by Lemma \ref{lem:2.3}. Moreover ${\sigma^{-1}_{+}} X_{\eta_{T_2}^+}^+$ is independent of $H_{\eta^-_{T_3}}^0$ 
 due to the Lemma \ref{lem:2.3} again. Hence ${\sigma^{-1}_{+}} X_{\eta_{T_2}^+}^+$ is independent of the event $(H_{\eta_{T_2}^+}^0<T_1\wedge H_{\eta^-_{T_3}}^0)$. Then we have for $y>0$ 
\begin{eqnarray*}
P(X_T\in dy)&=& P(\sigma^{-1}_{+} X_{\eta_{T_2}^+}^+\in dy,\,H_{\eta_{T_2}^+}^0<T_1\wedge H_{\eta^-_{T_3}}^0)\\&=& P(\sigma^{-1}_{+} X_{\eta_{T_2}^+}^+\in dy)\,P(A_{\eta_{T_2}^+}^0<T_1\wedge H_{\eta^-_{T_3}}^0)\\&=&
\frac{\g}{\sigma_{+}} \exp(-\frac{\g }{\sigma_{+}}y)\;\frac{\frac{\theta\g}{2\sigma_{+}}}{\l+\g r \theta}=\frac{\l\theta \exp(-\frac{\g }{\sigma_{+}}y)}{\sigma_{+}^2(\l+\g r \theta)}
\end{eqnarray*}
For $X_T<0$, we should have $T=\eta^-_{T_3}$ and this is can be possible only if $H_{\eta_{T_3}^-}^0<T_1\wedge H_{\eta^+_{T_2}}^0$. Note that ${\sigma^{-1}_{-}} X_{\eta_{T_3}^-}^-$ is a 
reflected Brownian motion and it is independent of the event $(H_{\eta_{T_3}^-}^0<T_1\wedge H_{\eta^+_{T_2}}^0)$ for similar reasons. Analogously we have for $y<0$
$$P(X_T\in dy)=\frac{\l\theta \exp(-\frac{\g }{\sigma_-}y)}{\sigma^2_-(\l+\g r \theta)}$$
We put $\rho=\l+\g r \theta$. Since $R_\l(0,dy)=1/\l\, P(X_T\in dy)$ for each $\l>0$ it follows that
$$R_\l(0,dy)=\frac{\theta}{\sigma^2_-\rho} \exp(-\frac{\gamma }{\sigma_-}y)dy+\frac{1}{\rho}\d_0(dy) 1_{\{y<0\}}+\frac{\theta}{\sigma_{+}^2\rho} \exp(-\frac{\g }{\sigma_{+}}y)dy+\frac{1}{\rho}\d_0(dy) 1_{\{y\geq0\}}$$
Let $T_0=\inf\{s\geq0:X_s=0\}$ be the first time for $X$ to reach 0 and let $R^0_\l(x,dy)$ be the transition kernel of a killed oscillating Brownian motion at 0, which is given by
$$R^0_\l(x,dy)= \left\{ \begin{array}{ll} {\Huge \displaystyle (\g \sigma_+)^{-1}(\exp(-\g \sigma^{-1}_+ |x-y|)-\exp(-\g{\sigma^{-1}_+} (x+|y|))}\,dy,\quad x\geq 0\\\\
{\Huge \displaystyle (\g \sigma^{-1}_-)(\exp(-\g \sigma^{-1}_- |x-y|)-\exp(-\g \sigma^{-1}_- (-x+|y|))}\,dy,\quad x< 0\end{array}\right.$$
By applying the well-known first passage time formula and by taking into consideration that $X$ behaves as an oscillating Brownian motion before reaching 0, we get for all $x\in\RR$
\begin{equation}\label{2.10}R_\l(x,dy)=R^0_\l (x,dy)+E^x(e^{-\l T_0})\,R_\l (0,dy)\end{equation}
Noting that (\cite{KW}, p. 305)\begin{equation}\label{2.11}E^x(e^{-\l T_0})=\exp(-\sqrt{2\l}\,\frac{x}{\sigma_{-}}) 1_{]-\infty,0[}(x)+\exp(-\sqrt{2\l}\,\frac{x}{\sigma_{+}}) 1_{[0,\infty[}(x)\end{equation}
Therefore (\ref{2.10}) together with (\ref{2.11}) yields
$$ R_\l(x,dy)=R^0_\l (x,dy)+\left\{ \begin{array}{ll} {\Huge \displaystyle \frac{\theta}{{\sigma^2_{+}}\rho} \exp{(-\g(\frac{x}{\sigma_+}+\frac{y}{\sigma_+}))}\, dy+\frac{1}{\rho}\exp(-\frac{\g x}{\sigma_+})}\,\d_0(dy);\,x,y\geq 0\\\\
\Huge \displaystyle \frac{\theta}{{\sigma^2_{-}}\rho} \exp{(-\g(\frac{x}{\sigma_+}-\frac{y}{\sigma_-}))}\, dy+\frac{1}{\rho}\exp(-\frac{\g x}{\sigma_+})\,\d_0(dy);\,x\geq 0, y<0\\\\
{\Huge \displaystyle \frac{\theta}{{\sigma^2_{+}}\rho} \exp{(-\g(\frac{y}{\sigma_+}-\frac{x}{\sigma_-}))}\, dy+\frac{1}{\rho}\exp(\frac{\g x}{\sigma_-})}\,\d_0(dy);\,x<0, y\geq0\\\\
{\Huge \displaystyle \frac{\theta}{{\sigma^2_{-}}\rho} \exp{(-\g(\frac{x}{\sigma_-}+\frac{y}{\sigma_-}))}\, dy+\frac{1}{\rho}\exp(\frac{\g x}{\sigma_-})}\,\d_0(dy);\, x,y<0
\end{array}\right.$$
Using tables, we can invert this Laplace transforms to give the transition kernel $Q_t(x,dy)$ for each $x,y\in\mathbb{R}$.
\end{proof}
\begin{thm}\label{thm:2.2} let $\b_1,\b_2\in\mathbb{R}$ and $\theta,\sigma_-, \sigma_+\in\mathbb{R}^*_+$ satisfying $\sigma_+<\sqrt{2}\sigma_-$ and $|\b_1-\b_2|<
2\theta$. Then, for each starting point $(x_1, x_2)\in\mathbb{R}^2$ there exists a
stochastic process $(X, X')=((X_t,X'_t), t\geq0)$ satisfying (\ref{1.3}). Moreover the law of $(X, X')$ is uniquely determined.

\end{thm}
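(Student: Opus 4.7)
The plan is to reduce the problem to a single OSBM by the decomposition
$$Y_t=\frac{X_t-X'_t}{\sqrt{2}},\qquad S_t=\frac{X_t+X'_t}{\sqrt{2}}-\frac{\b_1+\b_2}{\sqrt{2}}\,t,$$
so that $X_t=(S_t+Y_t)/\sqrt{2}+\b_1 t$ and $X'_t=(S_t-Y_t)/\sqrt{2}+\b_2 t$. The third line of \eqref{1.3} becomes an assertion about $Y$, and the marginal conditions on $X,X'$ translate into prescribed values of $\langle S\rangle$ and $\langle S,Y\rangle$.

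For existence I plan to build $(S,Y)$ directly. Take $Y$ to be the OSBM$(\sigma_+,\sigma_-,\sqrt{2}\theta)$ starting from $(x_1-x_2)/\sqrt{2}$ supplied by Theorem~\ref{thm:2.1}, and on a suitably enlarged probability space take a standard Brownian motion $C$ independent of $Y$. Define
$$S_t=\frac{x_1+x_2}{\sqrt{2}}+\int_0^t\sqrt{c(Y_s)}\,dC_s,\qquad c(y):=(2\sigma_-^2-\sigma_+^2)1_{\{y>0\}}+\sigma_-^2\,1_{\{y<0\}}+2\sigma_-^2\,1_{\{y=0\}}.$$
The hypothesis $\sigma_+<\sqrt{2}\sigma_-$ makes $c\geq 0$, so $S$ is a well-defined continuous local martingale. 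The resulting $X$ and $X'$ start at $x_1$ and $x_2$. Independence of $C$ from the driving Brownian motion of $Y$ gives $\langle S,Y\rangle\equiv 0$, and combining this with the identity $d\langle Y\rangle_t=(\sigma_+^2 1_{\{Y_t>0\}}+\sigma_-^2 1_{\{Y_t<0\}})\,dt$ from Proposition~\ref{prop:2.2} and the definition of $c$, a case split according to the sign of $Y_t$ yields $\langle X\rangle_t=\langle X'\rangle_t=\sigma_-^2 t$. L\'evy's characterization then forces $X$ and $X'$ to be Brownian motions with scaling $\sigma_-$ and drifts $\b_1,\b_2$ starting from $x_1,x_2$.

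For law uniqueness I intend to mirror the uniqueness half of Theorem~\ref{thm:2.1}. Given any solution $(X,X')$ of \eqref{1.3}, form $(S,Y)$ by the inverse decomposition. The third line of \eqref{1.3} pins down the law of $Y$ via Theorem~\ref{thm:2.1}, while the constraints on the marginals of $X$ and $X'$ force
$$\langle S\rangle_t=\int_0^t c(Y_s)\,ds,\qquad \langle S,Y\rangle_t=0.$$
Applying Dambis--Dubins--Schwartz to $S$ and time-changing as in \eqref{2.4}--\eqref{2.6}, extract a Brownian motion from $S$; the orthogonality with $Y$ should allow this Brownian motion to be chosen independent of $Y$, realising $(S,Y)$ (and therefore $(X,X')$) as a measurable functional of $(C,Y)$ whose law is uniquely specified.

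The most delicate step is precisely this last one: upgrading the orthogonality $\langle S,Y\rangle=0$ to genuine independence of the driving Brownian motions, given that the joint process spends a set of positive Lebesgue measure in the sticky layer $\{Y=0\}$ where both components contribute nontrivially. The hypothesis $|\b_1-\b_2|<2\theta$ is needed here: rewritten as $|(\b_1-\b_2)/\sqrt{2}|<\sqrt{2}\theta$, it says that the drift inherited by $Y$ through the decomposition stays strictly below its stickiness coefficient, so that the sticky-with-drift martingale problem for $Y$ remains well-posed and the uniqueness argument used in Theorem~\ref{thm:2.1} transfers without modification.
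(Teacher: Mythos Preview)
Your decomposition has an internal inconsistency that reflects a real obstruction. From $Y_t=(X_t-X'_t)/\sqrt{2}$ and your definition of $S_t$ one obtains $X_t=(S_t+Y_t)/\sqrt{2}+\tfrac{\b_1+\b_2}{2}\,t$, not $+\b_1 t$. If instead you take $X_t=(S_t+Y_t)/\sqrt{2}+\b_1 t$ and $X'_t=(S_t-Y_t)/\sqrt{2}+\b_2 t$ as \emph{definitions}, then
\[
\frac{X_t-X'_t}{\sqrt{2}}=Y_t+\frac{\b_1-\b_2}{\sqrt{2}}\,t,
\]
which is not $Y$. More seriously, this process is not sticky at $0$ in any sense: your $Y$ sits at $0$ on a set of positive Lebesgue measure, but $Y_t+\mu t$ with $\mu\neq 0$ never sits at $0$ (while $Y$ is at $0$ the sum is at $\mu t$, which is moving), so $\int_0^t 1_{\{(X_s-X'_s)/\sqrt{2}=0\}}\,ds=0$ and the second line of \eqref{1.2} fails for $(X-X')/\sqrt{2}$. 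Your existence construction therefore works only when $\b_1=\b_2$; you half-recognise the drift issue in your uniqueness discussion but do not build it into the construction.

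The paper handles the drift by \emph{not} starting from an OSBM. It first solves, in the pre-time-change scale, the SDE
\[
Z_t=\sqrt{2}\,B_t+(\b_1-\b_2)\Bigl[\tfrac{1}{2\theta}L_t(Z)+(\sigma_+^{-2}-\sigma_-^{-2})\Gamma_t(Z)+\sigma_-^{-2}t\Bigr]+(x_1-x_2),
\]
the bracket being exactly the additive functional $\a_t$. Well-posedness comes via Girsanov and \cite{EM}, and \emph{this} is where $|\b_1-\b_2|<2\theta$ is used. Setting $A=\a^{-1}$ and building the sum from an independent Brownian motion $B'$ time-changed by $T_t=2\sigma_-^2 t-A_t$ (strictly increasing because $\sigma_+<\sqrt{2}\sigma_-$), one defines $X,X'$ by \eqref{2.15}. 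Then $X_t-X'_t=Z_{A_t}=\sqrt{2}B_{A_t}+(\b_1-\b_2)t+(x_1-x_2)$: the drift sits in the \emph{sticky} clock, and the process is genuinely sticky at $0$ because $\a$ contains the local-time term. The moral is that the drift must be woven into the difference process \emph{before} the sticky time change, not added afterwards.

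For uniqueness your outline is close to the paper's: the step you flag as delicate---upgrading $\langle S,Y\rangle=0$ to independence of the extracted Brownian motions---is exactly Knight's theorem (\cite{RY}, Theorem~V.1.9), which the paper applies to the orthogonal martingales $W-W'$ and $W+W'$ after the respective time changes $\a$ and $\tau$.
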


\begin{proof}
Consider the stochastic differential equation given by
\begin{equation}\label{2.12}\left\{\begin{array}{ll}{\Huge \displaystyle Z_t=\sqrt{2} B_t+\frac{\b_1-\b_2}{2\theta} \,L_t(Z)+(\b_1-\b_2)\,(\frac{1}{\sigma^2_{+}}-\frac{1}{\sigma^2_{-}})\,\Gamma_t(Z)+\frac{\b_1-\b_2}{\sigma^2_{-}}\, t}\\
Z_0=x_1-x_2\end{array}\right.
\end{equation}
where $B=(B_t,t\geq0)$ is a standard Brownian motion defined on some filtered probability
space $(\Omega, \mathcal{F}, (\mathcal{F}_t), P)$. We put for all $t>0$$$M_t=\exp(-\frac{\beta_1-\beta_2}{\sqrt{2}\sigma^2_-}B_t-\frac{(\beta_1-\beta_2)^2}{4\sigma^4_-}\,t)$$
It is well known that $(M_t, t\geq 0)$ is a martingale, so there exists a probability measure $\widehat{P}$ on $(\Omega, \mathcal{F})$ such that for all $A\in\mathcal{F}_t$
$$\widehat{P}|_{\mathcal{F}_t}(A)=E(M_t\,1_{A})$$
Then by Girsanov Theorem the process $\widehat{B}=(\widehat{B}_t, t\geq0)$, given by $\widehat{B}_t=B_t+\frac{\beta_1-\beta_2}{\sqrt{2}\sigma^2_-}\,t$ for all $t>0$, is a Brownian motion under $\widehat{P}$. Therefore the SDE (\ref{2.12}) becomes 
\begin{equation}\label{2.13}\left\{\begin{array}{ll}{\Huge \displaystyle Z_t=\sqrt{2} \widehat{B}_t+(\b_1-\b_2)\,(\frac{1}{\sigma^2_{+}}-\frac{1}{\sigma^2_-})\,\Gamma_t(Z)+\frac{\b_1-\b_2}{2\theta} \,L_t(Z)}\\
Z_0=x_1-x_2\end{array}\right.
\end{equation}
The linearity of local times by positive scalar multiplication and the fact that $|\beta_1-\beta_2|<2\theta$, yields the existence of a unique strong solution of (\ref{2.13}), due to (\cite{EM}, Sec. 2.2.1). Therefore, if there exists a solution to (\ref{2.12}), its law is uniquely specified. To show that there exists a solution to
(\ref{2.12}) we can start with a solution to (\ref{2.13}) and apply Girsanov's Theorem in
reverse. To prove the existence part of the Theorem let $Z=(Z_t,t\geq0)$ be a solution to (\ref{2.12}) defined on $(\Omega, \mathcal{F}, P)$. The fact that $Z$ is a semimartingale yields $\int_0^t 1_{\{Z_s=0\}}\,ds=0$, due to the occupation formula.
Consider the additive functional $\alpha=(\alpha_t, t\geq 0)$ defined by $$\a_t=\frac{t}{\sigma_{-}^2}+(\frac{1}{\sigma_{+}^2}-\frac{1}{\sigma_{-}^2})\,\Gamma_t(Z)+\frac{1}{2\theta} L_t(Z)$$
For the same reasons as in the proof of Theorem~\ref{thm:2.1} we see that $t\ra\alpha_t$ admits an inverse $t\ra A_t$ defined by $A_t=\a^{-1}_t$, which is finite for all $t\geq0$. Moreover we have
\begin{eqnarray}A_t&=&\sigma^2_- t+(\sigma_{+}^2-\sigma^2_-)\Gamma_t(Z_A)-\frac{\sigma_{+}^2}{2\theta} L_t(Z_A)\label{***}\\&=&\int_0^t (\sigma_{-}^2 1_{\{Z_{A_s}<0\}}+\sigma_{+}^2 1_{\{Z_{A_s}>0\}})\,ds\label{4*}\end{eqnarray}
From (\ref{***}) we can write $T_t:=2\sigma_{-}^2t-A_t=\int_0^t(\sigma_{-}^2 1_{\{Z_{{A}_t}<0\}}+(2\sigma_{-}^2-\sigma_{+}^2)1_{\{Z_{{A}_t}\geq0\}})\,ds+\frac{\sigma_{+}^2}{2\theta} L_t(Z_A)$.
Then $t\ra T_t$ is continuous, strictly increasing and $T_\infty=\infty$, because $\sigma_{+}<\sqrt{2}\sigma_{-}$. Therefore its inverse $t\ra\tau_t$ given by $\tau_t=T^{-1}_t$ is finite for all $t\geq0$. Define the process $Z'=(Z'_t,t\geq0)$ by \begin{equation}\label{2.14}Z'_t=\sqrt{2} B'_t+(\b_1+\b_2)\tau_t+x_1+x_2\end{equation}
 where $B'=(B'_t, t\geq 0)$ is a Brownian motion independent of $B$ and defined on the same probability space. We now let $X_t=\frac{1}{2}(Z'_{2\sigma_{-}^2t-A_t}+Z_{A_t})$ and $X'_t=\frac{1}{2}(Z'_{2\sigma_{-}^2t-A_t}-Z_{A_t})$,
then \begin{equation}\label{2.15}X_t=\frac{1}{\sqrt{2}} B'_{2\sigma_{-}^2t-A_t}+\frac{1}{\sqrt{2}} B_{A_t}+\beta_1 t+x_1\quad\mbox{\it{and}}\quad X'_t=\frac{1}{\sqrt{2}} B'_{2\sigma_{-}^2t-A_t}-\frac{1}{\sqrt{2}} B_{A_t}+\b_2 t+x_2\end{equation}
Let $(\mathcal{F}^B_t)$ and $(\mathcal{F}^{B'}_t)$ be the filtrations generated by $B$ and $B'$ respectively and then let $\mathcal{G}_t=\mathcal{F}^B_{A_t}\vee\mathcal{F}^{B'}_\infty$ and $\mathcal{H}_t=\mathcal{F}^{B'}_{2{\sigma_{-}^2}t-A_t}\vee\mathcal{F}^B_\infty$. 
By the independence of $B$ and $B'$ we can show that $B_{A_t}$ is $\mathcal{G}_t$-martingale and $B'_{2\sigma_{-}^2t-A_t}$ is 
$\mathcal{H}_t$-martingale due to the theory of continuous times changes.
The relationship (\ref{2.15}) tell us that
$$\mathcal{F}_t^{X,X'}:=\sigma(X_s,X'_s; 0\leq s\leq t)=\sigma(B_{A_s}, B'_{2\sigma_{-}^2s-A_s};0\leq s\leq t)$$ 
Since $\mathcal{F}_t^{X,X'}\subset \mathcal{G}_t$ and $\mathcal{F}_t^{X,X'}\subset \mathcal{H}_t$ then $B_{A_t}$ and $B'_{2\sigma_{-}^2t-A_t}$ are both $\mathcal{F}_t^{X,X'}$-martingales due to the tower property. 
Therefore the processes $(\frac{1}{\sigma_-}(X_t-\b_1 t), t\geq 0)$ and 
$(\frac{1}{\sigma_-}(X'_t-\b_2 t), t\geq0)$ are both $\mathcal{F}_t^{X,X'}$-martingales. Also $<X/\sigma_->_t=<X'/\sigma_->_t=t$, $X_0=x_1$ and $X'_0=x_2$. Hence 
$X$ is a Brownian motion with drift $\b_1$ and scaling $\sigma_-$ started at $x_1$ and $X'$ is a Brownian motion with drift $\b_2$ and scaling $\sigma_-$ started at $x_2$ 
with to a common filtration $(\mathcal{F}_t^{X,X'})$. Next we observe that \begin{equation}\label{2.16}X_t-X'_t= Z_{A_t},\quad t\geq 0\end{equation}
By Tanaka's formula it is easy to show that $L_t(X-X')=L_{A_t}(Z)$. 
Making use of (\ref{***}), (\ref{4*}) and (\ref{2.16}) we obtain \begin{eqnarray} L_t(X-X')&=&\frac{2\theta}{\sigma_{+}^2} (\sigma_{-}^2 t+(\sigma_{+}^2-\sigma_{-}^2) \Gamma_t(X-X')-A_t)\nonumber
\\&=&\frac{2\theta}{\sigma_{+}^2}[\sigma_{-}^2 t-\sigma_{-}^2 \int_0^t 1_{\{X_s\not=X'_s\}}\,ds+(\sigma_{+}^2-\sigma_{-}^2) \int_0^t 1_{\{X_s=X'_s\}}\,ds]\nonumber\\&=&2\theta \int_0^t 1_{\{X_s=X'_s\}}\,ds\label{2.19}
\end{eqnarray}
By reason of $<X-X'>_t=2 A_t$, it follows from (\ref{4*}), (\ref{2.19}) and Proposition~\ref{prop:2.2} that $\frac{X-X'}{\sqrt{2}}$ is an OSBM$(\sigma_+,\sigma_-, \sqrt{2}\,\theta)$. This completes the proof of existence. Now assume that we have any pair of processes $(X,X')$, defined on some
filtered probability space $(\Omega, \mathcal{F}, (\mathcal{F}_t)_{t\geq 0}, P)$, that satisfy the properties of the
Theorem \ref{thm:2.2}. We now define
$$A_t=\int_0^t (\sigma_{-}^2 1_{\{X_s<X'_s\}}+\sigma_{+}^2 1_{\{X_s>X'_s\}})\,ds$$
and we let $\a_t=\inf\{s\geq 0: A_s>t\}$ and $\tau_t=\inf\{ s\geq 0: 2\sigma_{-}^2s-A_s>t\}$. 
It is possible to show that $A_\infty=\infty$. To prove the uniqueness in law, we must show that
the joint laws of $(X_{\a_t}-X'_{\a_t}, t> 0)$ and $(X_{\tau_t}+X'_{\tau_t}, t> 0)$ are equal
to the joint laws of $Z$ and $Z'$, where $Z$ is the solution to stochastic equation
(\ref{2.12}) and $Z'$ is given by (\ref{2.14}), and also that $\a_t=\sigma_{-}^{-2} t+(\sigma_{+}^{-2}-\sigma_{-}^{-2})\Gamma_t(X_\a-X'_\a)+(2\theta)^{-1} L_t(X_\a-X'_\a)$.
Let $W$ and $W'$ are Brownian motions given by 
$$W_t=\frac{1}{\sigma_-}(X_t-\b_1 t-x_1)\quad\mbox{and}\quad W'_t=\frac{1}{\sigma_-}(X'_t-\b_2 t-x_2)$$
Clearly $W-W'$ and $W+W'$ are martingales, $<W-W'>_t=\sigma_{-}^{-2}<X-X'>_t=2\sigma_{-}^{-2} A_t$ and \begin{eqnarray*}
<W+W'>_t&=&\sigma_{-}^{-2}<X+X'>_t\\&=&\sigma_{-}^{-2}(2<X>_t+2<X'>_t-<X-X'>_t)\\&=&\sigma_{-}^{-2}(4\sigma_{-}^2 t-2A_t)=4t-2\sigma_{-}^{-2} A_t\end{eqnarray*} 
Moreover $<W+W',W-W'>=0$. Thus it follows, from Knight's
Theorem (\cite{RY}, Thm. V.1.9), that $(W_{\a_t}-W'_{\a_t}, t\geq 0)$ and $(W_{\tau_t}+W'_{\tau_t}, t\geq 0)$ are independent and each equal in 
distribution to $(\frac{\sqrt{2}}{ \sigma_{-}} B_t, t\geq 0)$, where B is a
standard Brownian motion. Now observe that 
\begin{equation}\label{2.20} X_{\a_t}-X'_{\a_t}=\sigma_-(W_{\a_t}-W'_{\a_t})+(\b_1-\b_2)\a_t+x_1-x_2\end{equation}
and \begin{equation}\label{2.21}L_t(X_{\a_t}-X'_{\a_t})=L_{\a_t}(X-X')=2\theta\int_0^{\a_t} 1_{\{X_s=X'_s\}}\,ds\end{equation}
Arguing as in the proof of Theorem \ref{thm:2.1}, we obtain from (\ref{2.21}) \begin{eqnarray*}\a_t&=&\int_0^{\a_t} 1_{\{X_s>X'_s\}}\,ds+\int_0^{\a_t} 1_{\{X_s<X'_s\}}\,ds+\int_0^{\a_t} 1_{\{X_s=X'_s\}}\,ds\\&=&
\frac{t}{\sigma_{-}^2}+(\frac{1}{\sigma_{+}^2}-\frac{1}{\sigma_{-}^2}) \int_0^t 1_{\{X_{\a_s}-X'_{\a_s} \geq 0\}}\,ds+\int_0^{\a_t} 1_{\{X_s-X'_s=0\}}\,ds\\&=&
\frac{t}{\sigma_{-}^2}+(\frac{1}{\sigma_{+}^2}-\frac{1}{\sigma_{-}^2}) \Gamma_t(X_\a-X'_\a)+\frac{1}{2\theta} L_t(X_\a-X'_\a)
\end{eqnarray*}
The above, together with (\ref{2.20}), tell us that the process $(X_{\a_t}-X'_{\a_t}, t> 0)$
solves the stochastic equation (\ref{2.12}) and hence it is equal
in distribution to $Z$. Then as $X_{\tau_t}+X'_{\tau_t}=\sigma_-(W_{\tau_t}+W'_{\tau_t})+(\b_1+\b_2)\tau_t+x_1+x_2$, we have that the joint distribution of $(X_{\a_t}-X'_{\a_t}, t> 0)$ and $(X_{\tau_t}+X'_{\tau_t}, t> 0)$ is equal
to the joint distribution of $Z$ and $Z'$, from which uniqueness in law follows.
\end{proof}
\section{Trivariate density for oscillating sticky Brownian motion}
Let $X$ be an OSBM$(\sigma_+,\sigma_-,\theta$) started at $x\in\mathbb{R}$. We remind that $X_t=B_{A_t}$, where $B$ is a Brownian motion started at $x$ and $(A_t, t\geq 0)$ is the inverse of $(\alpha_t, t\geq 0)$ which is defined by (\ref{1.1}). According to (\cite{IM}, Chap. 5), $X$ is a strong Markov process. The distribution of $T_0=\inf\{s\geq0:X_s=0\}$ coincides with that 
of an oscillating Brownian motion which is given by (cf. \cite{KW}, Theorem 2) 
$$P(T_0\in ds)=h(s,\frac{x}{\sigma_+})\, 1_{\{x\geq 0\}}+h(s,-\frac{x}{\sigma_-})\, 1_{\{x\leq 0\}}$$ 
where $$h(s,z)=\frac{z}{\sqrt{2\pi}\,s^{3/2}} \, \exp(-\frac{z^2}{2s}),\; z\geq 0, s>0$$ 
Moreover, by the strong Markov property, $h$ satisfies the following convolution property
\begin{equation}\label{4.1}h(\, .\,, x_1)\ast h(\,.\,, x_2)=h(\,.\,, x_1+x_2),\qquad x_1, x_2>0\end{equation}
\begin{lem}\label{lem:3.1} If $X$ started at zero then the triplets $(X_t, L_t(X), \Gamma_t(X))$ and $(-Y_t, L_t(Y), t-\Gamma_t(Y)+\frac{1}{\theta} L_t(Y))$ have the same law for each $t>0$, where $Y$ is an OSBM$(\sigma_-,\sigma_+,\theta)$.
\end{lem}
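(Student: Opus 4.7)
The plan is to exhibit $Y := -X$ as an explicit OSBM$(\sigma_-,\sigma_+,\t)$ and then read off the asserted identities pathwise. Take $X$ to be a weak solution of (\ref{1.2}) on some probability space, driven by a Brownian motion $W$, so that $X$ is an OSBM$(\sigma_+,\sigma_-,\t)$ started at $0$. Set $Y_t := -X_t$ and $\widetilde W := -W$; the latter is again a standard Brownian motion. Using the trivial identities $\{-X_s<0\} = \{X_s>0\}$ and $\{-X_s>0\} = \{X_s<0\}$, a direct substitution into (\ref{1.2}) shows that $(Y,\widetilde W)$ satisfies the same SDE with the roles of $\sigma_+$ and $\sigma_-$ interchanged. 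Since the symmetric local time at $0$ is invariant under $x\mapsto -x$, one has $L_t(Y) = L_t(-X) = L_t(X)$, so the sticky condition $dL_t(X) = \t\,1_{\{X_s=0\}}\,ds$ transfers to $Y$ with the \emph{same} constant $\t$ (the sticky term depends only on the set $\{X_s=0\}=\{Y_s=0\}$, which is unchanged by the reflection). By the weak uniqueness in Theorem~\ref{thm:2.1}, $Y$ is distributed as an OSBM$(\sigma_-,\sigma_+,\t)$ started at $0$.

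With that realization in hand, I would match the three coordinates pathwise. The first two matches are immediate from the construction: $X_t = -Y_t$ and $L_t(X) = L_t(Y)$. For the third, I would split
\begin{equation*}
\G_t(X) = \int_0^t 1_{\{X_s\geq 0\}}\,ds = \int_0^t 1_{\{Y_s\leq 0\}}\,ds = \int_0^t 1_{\{Y_s<0\}}\,ds + \int_0^t 1_{\{Y_s=0\}}\,ds,
\end{equation*}
note that $\int_0^t 1_{\{Y_s<0\}}\,ds = t - \G_t(Y)$ by complementarity, and apply the sticky identity proved inside Theorem~\ref{thm:2.1} to rewrite $\int_0^t 1_{\{Y_s=0\}}\,ds = \frac{1}{\t}L_t(Y)$. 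This gives $\G_t(X) = t - \G_t(Y) + \frac{1}{\t}L_t(Y)$, and combining the three identifications yields the pathwise equality
\begin{equation*}
(X_t, L_t(X), \G_t(X)) = \Bigl(-Y_t,\, L_t(Y),\, t - \G_t(Y) + \tfrac{1}{\t} L_t(Y)\Bigr),
\end{equation*}
which is strictly stronger than the equality in law asserted in the lemma.

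No substantive obstacle arises: the lemma is essentially the observation that the SDE (\ref{1.2}) is symmetric under the simultaneous swap $x\mapsto -x$, $W\mapsto -W$, $\sigma_+\leftrightarrow\sigma_-$, combined with the bookkeeping of the Lebesgue measure of the zero set via the sticky relation. The only point that deserves a moment's care is confirming that the stickiness parameter $\t$ is untouched by the reflection; this is clear because only the diffusion coefficients feel the sign change, while the sticky boundary condition depends purely on the event $\{X_s=0\}$.
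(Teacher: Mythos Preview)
Your argument is correct and in fact slightly more direct than the paper's own proof. Both proofs start from the same key observation that $Y:=-X$ is an OSBM$(\sigma_-,\sigma_+,\theta)$, but they diverge in how the occupation--time coordinate is handled. The paper pulls the computation back to the driving Brownian motion: it invokes the classical trivariate identity $(B_t,L_t(B),\Gamma_t(B))\overset{\mathrm{law}}{=}(-B_t,L_t(B),t-\Gamma_t(B))$ of Karatzas--Shreve, transfers it through the time change via $\widetilde A_t=A_t$ (both being the quadratic variation of $X$), and then translates the resulting relation on $\Gamma_{A_t}(B)$ back to $\Gamma_t(X)$ and $\Gamma_t(Y)$ using the bookkeeping formulas (\ref{*}) and (\ref{**}). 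You bypass that detour entirely by staying at the level of the OSBM itself: the pathwise identity $\Gamma_t(X)=\int_0^t 1_{\{Y_s\leq 0\}}\,ds=t-\Gamma_t(Y)+\int_0^t 1_{\{Y_s=0\}}\,ds$ together with the sticky relation $\int_0^t 1_{\{Y_s=0\}}\,ds=\frac{1}{\theta}L_t(Y)$ gives the third coordinate immediately, and you even obtain the statement pathwise rather than merely in law. The paper's route has the merit of tying the lemma visibly to its Brownian prototype, which is natural given that Section~3 is modeled on \cite{KS}; your route is shorter and uses nothing beyond the SDE characterization already established in Theorem~\ref{thm:2.1}.
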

\begin{proof}According to (\cite{KS}, p. 820) we have \begin{equation}\label{3.2}(B_t, L_t(B), \Gamma_t(B))\overset{law}{=}(-B_t, L_t(B), t-\Gamma_t(B))\end{equation} From (\ref{1.2}), we can show that $Y=-X$ is an $OSBM(\sigma_-,\sigma_+,\theta)$ starting at 0. Therefore, by Theorem \ref{thm:2.1}, there exists a standard Brownian motion $(\widetilde{B}_t, t\geq 0)$ such that $Y=\widetilde{B}_{\widetilde{A}_t}$, where $\widetilde{A}_t=\sigma^2_{+}t+(\sigma^2_{-}-\sigma^2_{+})\Gamma_t(Y)-\frac{\sigma^2_{+}}{\theta}L_t(Y)$. Hence 
\begin{equation}\label{3.3}\widetilde{A}_t=<\widetilde{B}_{\widetilde{A}_t}>=<Y>_t=<X>_t=A_t\end{equation}
By reason of (\ref{3.2}) and (\ref{3.3}), we obtain \begin{equation}\label{3.4}(B_{A_t}, L_{A_t}(B), \Gamma_{A_t}(B))\overset{law}{=}(-\widetilde{B}_{\widetilde{A}_t}, L_{\widetilde{A}_t}(\widetilde{B}), \widetilde{A}_t-\Gamma_{\widetilde{A}_t}(\widetilde{B}))\end{equation}
It follows from (\ref{3.4}), (\ref{*}) and (\ref{**}) that
\begin{equation}\label{3.5}(X_t, L_t(X), \sigma^2_{+} \Gamma_t(X)-\frac{\sigma^2_{+}}{\theta} L_t(X))\overset{law}{=}(-Y_t, L_t(Y), \sigma^2_{+}t-\sigma^2_{+} \Gamma_t(Y))\end{equation}
The result holds immediately by applying the map $(x,y,z)\rightarrow(x,y,\frac{z}{\sigma^2_{+}}+\frac{y}{\theta})$ on each triplet of the equality (\ref{3.5}).
\end{proof}
\begin{thm}\label{thm 3.2} For ${0<\displaystyle\frac{l}{\theta}\leq \tau\leq t}$ the triplet $(X_t, L_t(X), \Gamma_t(X))$ has the following joint density
$$\phi(t,x,y,l,\tau)=\left\{ \begin{array}{ll} {\Huge \displaystyle \sigma^{-2}_- h(t-\tau,\frac{l/2-y}{\sigma_-})\,h(\tau-\frac{l}{\theta},\frac{l}{2\sigma_+}+\frac{x}{\sigma_+})};\,x\geq 0,y<0\\\\
{\Huge \displaystyle \sigma^{-2}_+ h(t-\tau,\frac{l}{2\sigma_-})\, h(\tau-\frac{l}{\theta},\frac{l/2+y+x}{\sigma_+})\,};\,x\geq 0,y>0\\\\ 
{\Huge \displaystyle \sigma^{-2}_- h(t-\tau,\frac{l/2-y}{\sigma_-})\,h(\tau-\frac{l}{\theta},\frac{l}{2\sigma_+}-\frac{x}{\sigma_-})};\,x\leq 0,y<0\\\\
{\Huge \displaystyle \sigma^{-2}_+ h(t-\tau,\frac{l}{2\sigma_-})\, h(\tau-\frac{l}{\theta},\frac{l/2+y}{\sigma_+}-\frac{x}{\sigma_-})\,};\,x\leq 0,y>0
\end{array}\right.$$

\end{thm}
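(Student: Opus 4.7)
The plan is to obtain the density of $(X_t,L_t(X),\Gamma_t(X))$ by pulling back the Karatzas--Shreve trivariate density for Brownian motion through the time change $X_t=B_{A_t}$, and then to reduce a general starting point $x$ to $x=0$ via the strong Markov property at $T_0=\inf\{s\geq0:X_s=0\}$.

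\textbf{Step 1 (case $x=0$).} From the proof of Theorem~\ref{thm:2.1}, conditioning on $(L_t(X),\Gamma_t(X))=(l,\tau)$ pins down the Brownian clock and its occupation variables at
\[
s:=A_t=\sigma_-^2 t+(\sigma_+^2-\sigma_-^2)\tau-\frac{\sigma_+^2}{\theta}l,\quad L_s(B)=l,\quad \Gamma_s(B)=\sigma_+^2\bigl(\tau-\tfrac{l}{\theta}\bigr),\quad s-\Gamma_s(B)=\sigma_-^2(t-\tau).
\]
I would test the law of $(X_t,L_t(X),\Gamma_t(X))$ against an arbitrary bounded $\phi$, integrate in $t$ against a $\chi\in L^\infty$, and then apply Fubini with the time change $t=\alpha_s$, so that $dt=d\alpha_s=\sigma_-^{-2}\mathbf 1_{\{B_s<0\}}\,ds+\sigma_+^{-2}\mathbf 1_{\{B_s>0\}}\,ds+\theta^{-1}\,dL_s(B)$. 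The $dL_s$-piece only contributes to the atom at $y=0$ and is discarded. On each half-line I insert the KS joint density of $(B_s,L_s(B),\Gamma_s(B))$: $h(s-\gamma,l/2)\,h(\gamma,(l+2y)/2)$ for $y>0$, and its reflection $h(\gamma,l/2)\,h(s-\gamma,(l-2y)/2)$ for $y<0$ (obtained from the Karatzas--Shreve symmetry~(\ref{3.2}) exploited in Lemma~\ref{lem:3.1}). Changing coordinates $(s,\gamma)\to(t,\tau)$ produces the Jacobian $ds\,d\gamma=\sigma_-^2\sigma_+^2\,dt\,d\tau$, and the scaling identity $h(cu,z)=c^{-1}h(u,z/\sqrt c)$ converts $h(\sigma_-^2(t-\tau),\cdot)$ and $h(\sigma_+^2(\tau-l/\theta),\cdot)$ into the $h$-factors appearing in the theorem. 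All $\sigma_\pm^{\pm 2}$ constants then collapse to exactly $\sigma_+^{-2}$ when $y>0$ and $\sigma_-^{-2}$ when $y<0$; the constraints $0<\gamma<s$ translate into the stated support $l/\theta<\tau<t$.

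\textbf{Step 2 (general $x$).} For $x>0$ I apply the strong Markov property at $T_0$. For $s<T_0$ one has $X_s>0$, so $L_s(X)=0$ and $\Gamma_s(X)=s$, while after $T_0$ the shifted process is an OSBM$(\sigma_+,\sigma_-,\theta)$ starting at $0$. Using the hitting density $P^x(T_0\in du)=h(u,x/\sigma_+)\,du$ recalled at the start of Section~3,
\[
\phi(t,x,y,l,\tau)=\int_0^t h(u,x/\sigma_+)\,\phi(t-u,0,y,l,\tau-u)\,du.
\]
The factor $h(t-\tau,\cdot)$ from Step~1 does not depend on $u$ and pulls out of the integral; what remains is a convolution in the variable $\tau-l/\theta$ to which the semigroup identity (\ref{4.1}) applies, giving a single $h$ whose last argument is augmented by $x/\sigma_+$. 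This reproduces the two cases $x\geq 0$ of the theorem. For $x<0$ I would either rerun the argument with $P^x(T_0\in du)=h(u,-x/\sigma_-)\,du$ (so the shift becomes $-x/\sigma_-$), or invoke Lemma~\ref{lem:3.1} applied to $Y=-X$, which is an OSBM$(\sigma_-,\sigma_+,\theta)$ starting at $-x>0$, and interchange the roles of $\sigma_+$ and $\sigma_-$ in the formulas already established.

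\textbf{Main obstacle.} The real calculation is Step~1: one has to make rigorous the substitution $s=A_t$ inside the KS density, even though $s$ is random and is linked through $(l,\tau)$ to the conditioning event, and then to carry through the two Jacobians $\sigma_\pm^2$ and the scaling of $h$ so that precisely the announced prefactor $\sigma_+^{-2}$ or $\sigma_-^{-2}$ survives. Testing against $\chi\in L^\infty$ and using Fubini under $t=\alpha_s$ is the cleanest way to legitimize the manipulation; the remaining work is bookkeeping of the $\sigma_\pm$ exponents.
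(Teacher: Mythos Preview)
Your proposal is correct and Step~2 coincides with the paper's argument (strong Markov at $T_0$ plus the convolution identity~(\ref{4.1})). Step~1, however, takes a genuinely different route. The paper treats $x=0$ by computing the triple Laplace transform
\[
\psi(a,b,c)=E^0\Big[\int_0^\infty 1_{[y,\infty[}(X_t)\,e^{-at-b\Gamma_t-cL_t}\,dt\Big],
\]
performs the time change $t=\alpha_s$ under the integral exactly as you do, but then invokes the Karatzas--Shreve Laplace formula (\cite{KS}, Lemma~2.1) to obtain a closed expression, and finally inverts the Laplace transform in $(a,b,c)$ by tables. You bypass the Laplace step entirely: after the same time change you insert the KS trivariate \emph{density} of $(B_s,L_s(B),\Gamma_s(B))$, and the affine map $(s,\gamma)\mapsto(t,\tau)$ with $s-\gamma=\sigma_-^2(t-\tau)$, $\gamma=\sigma_+^2(\tau-l/\theta)$, together with $h(cu,z)=c^{-1}h(u,z/\sqrt c)$, gives the density directly. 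Your bookkeeping is right: the Jacobian $\sigma_-^2\sigma_+^2$, the two scaling factors $\sigma_-^{-2}\sigma_+^{-2}$, and the $d\alpha_s$ weight $\sigma_\pm^{-2}$ collapse to the announced prefactor. The advantage of your approach is that no Laplace inversion is needed and the support condition $l/\theta<\tau<t$ emerges transparently from $0<\gamma<s$; the paper's route, on the other hand, requires only the Laplace identity from \cite{KS} as input rather than the full trivariate density, and handles the $dL_s$ contribution implicitly. For $y<0$ both arguments ultimately rest on the Brownian reflection symmetry~(\ref{3.2}); the paper packages this through Lemma~\ref{lem:3.1}, while you apply it directly at the Brownian level before changing variables.
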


\begin{proof} The proof is adapted from (\cite{KS}, p. 822-824). Let $a,b,c$ and $y$ be positive reel numbers. Define
$$\psi(t,a,b,c)=E^0\big[\int_0^\infty 1_{[y,\infty[}(X_t)\,\exp(-a t-b \Gamma_t-c L_t)\,dt\big]$$
By using (\ref{1.1}), (\ref{*}) and famous Laplace inverse transforms, it follows from (\cite{KS}, Lemma 2.1) that
\begin{align*}
\psi(t,a,b,c)&=E^0[\int_0^\infty 1_{[y,\infty[}(B_{A_t}) \exp(-at-\frac{b}{{\sigma_+}^2}\Gamma_{A_t}(B)-(\frac{b}{\theta}+c) L_{A_t}(B))\,dt\\&=E^0[\int_0^\infty 1_{[y,\infty[}(B_t) \exp(-a\alpha_t-\frac{b}{{\sigma_+}^2}\Gamma_t(B)-(\frac{b}{\theta}+c) L_t(B))\,d\alpha_t\\
&=E^0[\int_0^\infty 1_{[y,\infty[}(B_t) \exp(-\frac{a}{{\sigma_-}^2}t-(\frac{a+b}{{\sigma_+}^2}-\frac{a}{{\sigma_-}^2})\Gamma_t(B)-(c+\frac{a+b}{\theta}) L_t(B))\,\frac{dt}{\sigma_{+}^2}\\&=\frac{2 \exp(-\dfrac{y}{\sigma_+}\sqrt{2(a+b)})}{\sigma_+\,\sqrt{2(a+b)} (c+\dfrac{a+b}{\theta}+\dfrac{\sqrt{2a}}{2\sigma_-}+\dfrac{\sqrt{2(a+b)}}{2\sigma_+})}\\
&=\int_0^\infty\int_0^\infty\int_0^\infty \frac{\exp (-a t-b\tau-c l)\,l}{{4\pi\sigma_+ \sigma_-(\tau-\frac{l}{\theta})^{\frac{1}{2}}(t-\tau)^{\frac{3}{2}}}} \exp(-\frac{(l/2)^2}{2\sigma^2_-(t-\tau)}-\frac{(y+l/2)^2}{2\sigma^2_+(\tau-\frac{l}{\theta})})1_{\{0<\frac{l}{\theta}<\tau<t\}}\\&\quad\quad\quad\quad\quad\quad\quad\quad\quad\quad\quad\quad\quad\quad\quad\quad\quad\quad\quad\quad\quad\quad\quad\quad\quad\quad\quad\quad\quad\quad\quad\quad\quad\quad d\tau\,dl\,dt
\end{align*}
Therefore we get for $y>0$ and $0<\frac{l}{\theta}<\tau<t$ $$P^0(X_t\geq y, L_t\in dl, \Gamma_t\in d\tau)=\frac{l}{{4\pi\sigma^2_+ \sigma_- (\tau-\frac{l}{\theta})^{\frac{1}{2}}(t-\tau)^{\frac{3}{2}}}} \exp(-\frac{(l/2)^2}{2\sigma^2_-(t-\tau)}-\frac{(y+l/2)^2}{2\sigma^2_+(\tau-\frac{l}{\theta})}) dl d\tau$$
This is the distribution of the pair $(L_t, \Gamma_t)$ on the event $[X_t\geq y]$. By differentiating with respect to $y$, it comes that for $0<\frac{l}{\theta}<\tau<t$
\begin{eqnarray*}\phi(t,0,y,l,\tau)&=&\frac{l (y+l/2)}{4\pi\sigma^3_+ \sigma_-(\tau-\frac{l}{\theta})^{\frac{1}{2}}(t-\tau)^{\frac{3}{2}}} \exp(-\frac{(l/2)^2}{2\sigma^2_-(t-\tau)}-\frac{(y+l/2)^2}{2\sigma^2_+(\tau-\frac{l}{\theta})})\\\\
&=&\sigma^{-2}_+ h(t-\tau,\frac{l}{2\sigma_-})\,h(\tau-\frac{l}{\theta},\frac{y+l/2}{\sigma_+})
\end{eqnarray*}
For $y<0$, we can replace $y$ by $-y, \,\tau$ by $t-\tau+\frac{l}{\theta},\,\sigma_+$ by $\sigma_-$ and $\sigma_-$ by $\sigma_+$ in the previous case, due to the Lemma~\ref{lem:3.1}. Hence we get for $0<\frac{l}{\theta}<\tau<t$ 
\begin{equation}\label{3.6}\phi(t,0,y,l,\tau)=\sigma^{-2}_- h(\tau-\frac{l}{\theta},\frac{l}{2\sigma_+})\,h(t-\tau,\frac{l/2-y}{\sigma_-})\end{equation}
Now suppose that $X_0=x\geq 0$. Using the strong Markov property we obtain for $y>0$
\begin{eqnarray*}
P^x(X_t\in dy, L_t\in dl, \Gamma_t\in d\tau)&=& P^x(X_t\in dy, L_t\in dl, \Gamma_t\in d\tau|T_0\leq \tau)\\&=&\int_0^\tau P^x(X_t\in dy, L_t\in dl, \Gamma_t\in d\tau|T_0\leq s)\,P^x(T_0\in ds)\\
&=& \int_0^\tau P^0(X_{t-s}\in dy, L_{t-s}\in dl, \Gamma_{t-s}\in d\tau-s)\,h(s,\frac{x}{\sigma_+})\,ds
\end{eqnarray*}
which implies, together with (\ref{3.6}), that
\begin{eqnarray*}\phi(t,x,y,l,\tau)&=&\sigma^{-2}_+ h(t-\tau,\frac{l}{2\sigma_-}) \int_0^\tau h(\tau-\frac{l}{\theta}-s, \frac{l/2+y}{\sigma_+})\,h(s,\frac{x}{\sigma_+})ds\\
&=&\sigma^{-2}_+ h(t-\tau,\frac{l}{2\sigma_-}) h(\tau-\frac{l}{\theta}, \frac{l/2+y}{\sigma_+}+\frac{x}{\sigma_+})
\end{eqnarray*}
A similar calculations yields for other cases.
\end{proof}
\begin{corollary}\label{corol:3.3} Let $r=\frac{1}{2}(\sigma^{-1}_++\sigma^{-1}_-)$, then for $0<l<\theta t$ we have
$$P^x(X_t\in dy, L_t\in dl)=\left\{ \begin{array}{ll} {\Huge \displaystyle \sigma^{-2}_- h(t-\frac{l}{\theta}, l\,r+ \frac{x}{\sigma_+}-\frac{y}{\sigma_-})\,dy \,dl};\; x\geq 0, y<0\\\\
{\Huge \displaystyle \sigma^{-2}_+ h(t-\frac{l}{\theta}, l\,r+ \frac{x+y}{\sigma_+})\,dy\,dl+p^0_t(x,y)\,dy\,\d_0(dl)}; x\geq 0, y>0\\\\
{\Huge \displaystyle \sigma^{-2}_- h(t-\frac{l}{\theta}, l\,r- \frac{x}{\sigma_-}-\frac{y}{\sigma_-})\,dy \,dl+p^0_t(x,y)\,dy\,\d_0(dl)}; x\leq 0, y<0\\\\
{\Huge \displaystyle\sigma^{-2}_+ h(t-\frac{l}{\theta}, l\,r-\frac{x}{\sigma_-}+\frac{y}{\sigma_+})\,dy\,dl}; x\leq 0, y>0
\end{array}\right.$$
\end{corollary}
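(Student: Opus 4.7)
The plan is to obtain the joint law of $(X_t,L_t)$ by marginalizing out $\Gamma_t$ from the density $\phi$ of Theorem~\ref{thm 3.2} and then adding a separate contribution from the event $\{L_t=0\}$.

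First I would integrate $\phi(t,x,y,l,\tau)$ in $\tau$ over the admissible range $[l/\theta,t]$. The substitution $s=\tau-l/\theta$ rewrites each integrand as a time convolution of two $h$-kernels on the interval $[0,t-l/\theta]$, and the convolution property (\ref{4.1}) then collapses this into a single $h$-factor whose second argument is the sum of the two original second arguments. For example, in the case $x\geq 0$, $y<0$ one gets
\[
\int_{l/\theta}^{t}\sigma_-^{-2}\,h\bigl(t-\tau,\tfrac{l/2-y}{\sigma_-}\bigr)\,h\bigl(\tau-\tfrac{l}{\theta},\tfrac{l/2+x}{\sigma_+}\bigr)\,d\tau
=\sigma_-^{-2}\,h\bigl(t-\tfrac{l}{\theta},\,lr+\tfrac{x}{\sigma_+}-\tfrac{y}{\sigma_-}\bigr),
\]
using $\tfrac{l}{2\sigma_-}+\tfrac{l}{2\sigma_+}=lr$. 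The three other quadrants are obtained by the same computation with the appropriate $h$-arguments from Theorem~\ref{thm 3.2}, each time reproducing the expression listed in the statement. The range restriction $0<l<\theta t$ in the corollary is exactly the surviving constraint after $\tau$ has been integrated out.

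Second I would add the contribution of the event $\{L_t=0\}=\{T_0>t\}$. On this event the path of $X$ never reaches $0$, so $X$ agrees with an oscillating Brownian motion killed at $0$, whose transition density is precisely the function $p_t^0(x,y)$ already appearing in Proposition~\ref{prop:2.4}. This yields the supplementary term $p_t^0(x,y)\,dy\,\d_0(dl)$, and by path continuity it can only arise in the sign-preserving quadrants $(x\geq 0,\,y>0)$ and $(x\leq 0,\,y<0)$; in the crossing quadrants $\{T_0\leq t\}$ almost surely and no atom at $l=0$ appears.

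Both steps are essentially computational: the real work is the convolution identity (\ref{4.1}), already granted, and the only care needed is sign bookkeeping when combining the arguments of the two $h$-kernels in each quadrant. I do not anticipate any substantive obstacle, except perhaps to double-check that the killed-process contribution in Proposition~\ref{prop:2.4} is correctly matched across the four cases of the statement.
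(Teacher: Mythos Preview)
Your proposal is correct and matches the paper's implicit approach: the corollary is stated without proof precisely because it follows from Theorem~\ref{thm 3.2} by integrating $\phi$ over $\tau\in[l/\theta,t]$ and invoking the convolution identity~(\ref{4.1}), together with the observation that on $\{L_t=0\}=\{T_0>t\}$ the process coincides with the oscillating Brownian motion killed at~$0$. Your only side remark is apt: the paper's notation is slightly inconsistent ($p_0(t,x,y)$ in Proposition~\ref{prop:2.4} versus $p^0_t(x,y)$ here, and the sign in the definition of $p_0$ there appears to be a typo for the killed density), so when writing this up you should state explicitly that $p^0_t(x,y)$ denotes the transition density of the killed oscillating Brownian motion, i.e.\ the inverse Laplace transform of $R^0_\lambda(x,dy)$.
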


\begin{corollary}\label{corol:3.4}
The occupation time of an OSBM$(\sigma_+,\sigma_-,\theta)$, started at zero, has the following distribution for $0<\tau<t$
$$P^0(\Gamma_t\in d\tau)=\frac{\theta^2}{4\pi\sigma_+\sigma_-(t-\tau)^{\frac{3}{2}}}\int_0^\tau \big(\frac{1}{\sigma_+ \sqrt{\tau-l}}+\frac{t-\tau}{\sigma_-(\tau-l)^{\frac{3}{2}}}\big) l\exp(-\frac{(\theta l)^2}{8\sigma^2_+(\tau-l)}-\frac{(\theta l)^2}{8\sigma^2_-(t-\tau)})\,dl$$
\end{corollary}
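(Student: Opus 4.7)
The plan is to derive the marginal law of $\Gamma_t(X)$ by integrating the trivariate density $\phi(t,0,y,l,\tau)$ of Theorem~\ref{thm 3.2} (applied at the starting point $x=0$) over the position variable $y\in\mathbb{R}$ and the local time variable $l$, keeping $\tau$ as a free parameter. Since $\phi$ is supported on $\{0<l/\theta<\tau<t\}$, I would start from
$$P^0(\Gamma_t\in d\tau)=d\tau\int_0^{\theta\tau}\Big(\int_0^\infty\phi(t,0,y,l,\tau)\,dy+\int_{-\infty}^0\phi(t,0,y,l,\tau)\,dy\Big)\,dl.$$

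Next I would evaluate the two inner $y$-integrals separately. In the $y>0$ piece only the factor $h(\tau-l/\theta,(y+l/2)/\sigma_+)$ depends on $y$, while in the $y<0$ piece only $h(t-\tau,(l/2-y)/\sigma_-)$ does. A linear substitution combined with the elementary identity $\int_a^\infty u\,e^{-u^2/(2s)}\,du=s\,e^{-a^2/(2s)}$ yields
$$\int_0^\infty h(\tau-l/\theta,(y+l/2)/\sigma_+)\,dy=\frac{\sigma_+}{\sqrt{2\pi(\tau-l/\theta)}}\exp\Big(-\frac{l^2}{8\sigma_+^2(\tau-l/\theta)}\Big),$$
and symmetrically for $y<0$. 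The two regimes produce the same exponential factor, so adding them gives the joint density of $(L_t(X),\Gamma_t(X))$ at $(l,\tau)$ as a prefactor $l/(4\pi\sigma_+\sigma_-)$ times the bracket
$$\Big[\frac{1}{(t-\tau)^{3/2}\sqrt{\tau-l/\theta}}+\frac{1}{(\tau-l/\theta)^{3/2}\sqrt{t-\tau}}\Big]\exp\Big(-\frac{l^2}{8\sigma_+^2(\tau-l/\theta)}-\frac{l^2}{8\sigma_-^2(t-\tau)}\Big).$$

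Finally I would integrate in $l$ over $(0,\theta\tau)$ and change variables via $l=\theta m$ to transform the range of integration to $m\in(0,\tau)$. The Jacobian $dl=\theta\,dm$ together with the explicit $l=\theta m$ already present in the prefactor produces a factor $\theta^2$, and pulling out $1/(t-\tau)^{3/2}$ from the bracket yields the stated formula after relabelling $m$ as $l$. There is no genuine analytical obstacle: the only nontrivial computation is the Gaussian integral above, and the remaining effort is careful bookkeeping of the constants $\sigma_+,\sigma_-,\theta$ across the two regimes $y>0$ and $y<0$, checking that the exponentials match so that the two contributions combine into a single bracket of the advertised shape.
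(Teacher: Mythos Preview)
Your approach is correct and is exactly the (implicit) derivation the paper has in mind: Corollary~\ref{corol:3.4} is meant to follow from Theorem~\ref{thm 3.2} by integrating the trivariate density $\phi(t,0,y,l,\tau)$ over $y\in\RR$ and then over $l$, with the substitution $l=\theta m$, just as you outline. One small caveat: if you track the constants through, the bracket you obtain after pulling out $(t-\tau)^{-3/2}$ is $\frac{1}{\sqrt{\tau-l}}+\frac{t-\tau}{(\tau-l)^{3/2}}$, \emph{without} the extra factors $\sigma_+^{-1}$ and $\sigma_-^{-1}$ printed in the statement; this appears to be a typographical slip in the displayed formula rather than an error in your method.
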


\begin{corollary}
The local time of an OSBM$(\sigma_+,\sigma_-, \theta)$, started at zero, has the following distribution
$$P^0(L_t\in dl)=\frac{2 r}{\sqrt{2\,\pi (t-\frac{l}{\theta})}} \exp(-\frac{(l r)^2}{2(t-\frac{l}{\theta})}) 1_{\{0<l<\theta t\}}\,dl$$
\end{corollary}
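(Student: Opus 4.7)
The plan is to obtain this marginal by integrating the joint density of $(X_t, L_t)$ from Corollary \ref{corol:3.3} (with starting point $x=0$) over the spatial variable $y \in \mathbb{R}$. Since we work on the set $\{0<l<\theta t\}$, the Dirac atom $\delta_0(dl)$ appearing in the $y>0$ piece of Corollary \ref{corol:3.3} contributes nothing, so only the absolutely continuous pieces matter. Putting $x=0$, the relevant densities reduce to
$$\sigma_{-}^{-2}\, h\!\left(t-\tfrac{l}{\theta},\, lr - \tfrac{y}{\sigma_{-}}\right)\quad\text{for } y<0,\qquad \sigma_{+}^{-2}\, h\!\left(t-\tfrac{l}{\theta},\, lr + \tfrac{y}{\sigma_{+}}\right)\quad\text{for } y>0.$$

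Next I would perform the change of variables $z = lr - y/\sigma_{-}$ in the first integral and $z = lr + y/\sigma_{+}$ in the second. In both cases $y=0$ corresponds to $z=lr$ and $|y|\to\infty$ corresponds to $z\to\infty$, and the Jacobians $|dy| = \sigma_{\pm}\, dz$ cancel one power of $\sigma_{\pm}^{-2}$. This gives
$$P^0(L_t\in dl) = \left(\sigma_{-}^{-1}+\sigma_{+}^{-1}\right)\!\left(\int_{lr}^{\infty}\! h\!\left(t-\tfrac{l}{\theta},z\right)dz\right) dl = 2r\int_{lr}^{\infty}\! h\!\left(t-\tfrac{l}{\theta},z\right)dz\; dl,$$
on $\{0<l<\theta t\}$, using the definition of $r$.

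Finally I would evaluate the remaining integral in closed form. Writing $s=t-l/\theta$ and substituting $u = z^2/(2s)$ (so that $z\,dz = s\,du$) inside
$$\int_{lr}^{\infty} h(s,z)\,dz = \int_{lr}^{\infty}\frac{z}{\sqrt{2\pi}\,s^{3/2}}\exp\!\left(-\tfrac{z^2}{2s}\right)dz$$
reduces it to $\frac{1}{\sqrt{2\pi s}}\int_{(lr)^2/(2s)}^{\infty} e^{-u}\,du = \frac{1}{\sqrt{2\pi s}}\exp(-(lr)^2/(2s))$. Substituting back $s = t-l/\theta$ and combining with the prefactor $2r$ yields the announced density. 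There is no genuine obstacle here: the proof is entirely a book-keeping exercise, the only point requiring care being the correct orientation of the two substitutions and the observation that $\sigma_{-}^{-1}+\sigma_{+}^{-1}=2r$ makes the two contributions combine symmetrically.
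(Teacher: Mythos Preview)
Your proof is correct and follows exactly the route that the paper (implicitly) intends: the corollary is stated without proof, directly after Corollary~\ref{corol:3.3}, and the natural derivation is precisely to set $x=0$ there, discard the Dirac contribution on $\{0<l<\theta t\}$, and integrate in~$y$. The substitutions and the identity $\sigma_-^{-1}+\sigma_+^{-1}=2r$ are handled correctly, and the final Gaussian tail integral is evaluated as it should be.
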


\end{document}